\newcommand{\simone}[1]{{\color{black}#1}}
\newcommand{\nfun}[2]{N_{#2}(#1)}
\newcommand{\nvec}[2]{N^{\nabla}_{#2}(#1)}
\def\Afr{\mathfrak{A}}
\def\dfr{\mathfrak{d}}
\newcommand{\Xscr}{\mathcal{X}}
\newcommand{\Sph}{{\mathcal S}}
\newcommand{\A}{\mathcal A}
\newcommand{\tond}[1]{{\left(#1\right)}}
\newcommand{\quadr}[1]{{\left[#1\right]}}
\newcommand{\graff}[1]{\{#1\}}
\newcommand{\Poi}[2]{\{#1,#2\}}
\newcommand{\norm}[1]{\left\|#1\right\|}
\newcommand{\Rscr}{\mathcal{R}}
\newcommand{\Pscr}{\mathcal{P}}
\newcommand{\Oscr}{\mathcal O}
\newcommand{\Zscr}{\mathcal Z}
\newcommand{\Orb}{{\mathsf O}}
\renewcommand{\leq}{\leqslant}
\renewcommand{\geq}{\geqslant}
\def\CC{\mathbb{C}}
\def\RR{\mathbb{R}}
\def\ZZ{\mathbb{Z}}
\def\NN{\mathbb{N}}
\def\eps{\epsilon}
\def\im{i}
\def\Chi{\Xscr}
\def\ph{\varphi}
\begin{document}

\title*{Existence and stability of Klein--Gordon breathers in the small-amplitude limit}
\titlerunning{Existence and stability of Klein--Gordon breathers}

\author{D.E. Pelinovsky, T. Penati, and S. Paleari}


\institute{
  D.E. Pelinovsky \at Department of Mathematics, McMaster University, Hamilton, Ontario,
    Canada, L8S 4K1
  \email{dmpeli@math.mcmaster.ca}
  \and
  T. Penati \at Department of Mathematics ``F.Enriques'', Milano University, via Saldini 50, Milano, Italy, 20133
  \email{tiziano.penati@unimi.it}
  \and
  S. Paleari \at Department of Mathematics ``F.Enriques'', Milano University, via Saldini 50, Milano, Italy, 20133
  \email{simone.paleari@unimi.it}
}

%
%
\maketitle

\abstract*{We consider a discrete Klein--Gordon (dKG)
  equation on $\ZZ^d$ in the limit of the discrete nonlinear
  Schr\"{o}dinger (dNLS) equation, for which small-amplitude breathers
  have precise scaling with respect to the small coupling strength
  $\eps$.  By using the classical Lyapunov--Schmidt method, we show
  existence and linear stability of the KG breather from existence and
  linear stability of the corresponding dNLS soliton. Nonlinear
  stability, for an exponentially long time scale of the order
  $\mathcal{O}(\exp(\eps^{-1}))$, is obtained via the normal form
  technique, together with higher order approximations of the KG
  breather through perturbations of the corresponding dNLS soliton.
}

\abstract{We consider a discrete Klein--Gordon (dKG)
  equation on $\ZZ^d$ in the limit of the discrete nonlinear
  Schr\"{o}dinger (dNLS) equation, for which small-amplitude breathers
  have precise scaling with respect to the small coupling strength
  $\eps$.  By using the classical Lyapunov--Schmidt method, we show
  existence and linear stability of the KG breather from existence and
  linear stability of the corresponding dNLS soliton. Nonlinear
  stability, for an exponentially long time scale of the order
  $\mathcal{O}(\exp(\eps^{-1}))$, is obtained via the normal form
  technique, together with higher order approximations of the KG
  breather through perturbations of the corresponding dNLS soliton.
}

\section{Introduction}

Nonlinear oscillators with weak linear couplings on the
$d$-dimensional cubic lattice are described by the discrete
Klein--Gordon (dKG) equation
\begin{equation}
\label{KG}
\ddot{u}_n + V'(u_n) = \eps(\Delta u)_n, \quad n \in \ZZ^d,
\end{equation}
where $\epsilon>0$ is the small coupling strength, $\Delta$ is the
discrete Laplacian operator on $\ell^2(\ZZ^d)$, and $V(u)$ is a
nonlinear potential for each oscillator. The total energy of the
nonlinear oscillators conserves in time $t$ and is given by the
Hamiltonian function
\begin{equation}
\label{Ham}
H(u) = \frac{1}{2} \sum_{n \in \ZZ^d} \dot{u}_n^2 + \frac{\eps}{2} \sum_{n \in \ZZ^d}
\sum_{|k-n|=1} (u_k - u_n)^2 + \sum_{n \in \ZZ^d} V(u_n).
\end{equation}
For illustrative purposes, we deal with a hard anharmonic potential in the form
\begin{equation}
\label{potential-V}
V(u) = \frac12u^2 + \frac1{2+2p}u^{2+2p},
\end{equation}
where $p \in \mathbb{N}$ is assumed for analyticity of the vector
field.  There exists the unique global solution to the Cauchy problem for the dKG
equation (\ref{KG}) with (\ref{potential-V}) equipped with the initial
datum $(u,\dot{u}) \in \ell^2(\ZZ^d) \times \ell^2(\ZZ^d)$, where $u$
stands for $\{ u_n \}_{n \in \ZZ^d}$. Because our main results are
formulated for small initial datum $(u,\dot{u})$, most of the results are applicable for general
anharmonic potentials expanded as
\begin{equation}
\label{potential-expansion}
V(u) = \frac12u^2 + \alpha_p u^{2+2p} + \mathcal{O}(u^{4+2p}) \quad \mbox{\rm as} \quad u \to 0,
\end{equation}
if $\alpha_p \neq 0$. The general anharmonic potential $V$ is
classified as {\em soft} if $\alpha_p < 0$ and {\em hard} if $\alpha_p
> 0$.

{\em Discrete breathers} are time-periodic solutions localized on the
lattice.  Such solutions can be constructed asymptotically by
exploring the two opposite limit: the anti-continuum limit $\epsilon
\to 0$ of weak coupling between the oscillators \cite{MA94} and the
continuum limit $\epsilon \to \infty$ of strong coupling
\cite{BamPP10}.  Compared to these asymptotic approximations, we explore here a different
limit of the dKG equation to the discrete nonlinear Schr\"{o}dinger (dNLS) equation, where
the weak coupling between the oscillators is combined together with
small amplitudes of each oscillator. To be precise, we assume the
scaling
\begin{equation}
\label{e.scale.orig}
  u_n = \eps^{1/2p}\tilde u_n, \quad n \in \ZZ^d
\end{equation}
and rewrite the dKG equation (\ref{KG}) with the potential (\ref{potential-V})
in the perturbed form:
\begin{equation}
  \label{e.KG.scaled}
\ddot{u}_n + u_n + \eps u_n^{1+2p} = \eps (\Delta u)_n, \quad n \in \ZZ^d,
\end{equation}
where the tilde notations have been dropped. By using a formal expansion 
$$u_n(t) = a_n(\eps t) e^{it} + \bar{a}_n(\eps t) e^{-it} + \mathcal{O}(\eps),$$
the following dNLS equation for the complex amplitudes is derived from the requirement
that the correction term $\mathcal{O}(\eps)$ remains bounded
in $\ell^2(\ZZ^d)$ on the time scale of $\mathcal{O}(\eps^{-1})$:
\begin{equation}
\label{dnls-introduction}
2 i a'_n + \gamma_p |a_n|^{2p} a_n = (\Delta a)_n, \quad n \in \ZZ^d,
\end{equation}
where the prime denotes the derivative with respect to the slow time variable $\tau = \eps t$ and
the numerical coefficient $\gamma_p$ is given by
\begin{equation}
\label{gamma-p}
\gamma_p = \binom{1+2p}{1+p}  = \frac{(2p+1)!}{p! (p+1)!}.
\end{equation}
The asymptotic relation between the dKG equation (\ref{e.KG.scaled})
and the dNLS equation (\ref{dnls-introduction}) was observed first in
\cite{PelS12} and was made rigorous by using two equivalent analytical
methods in our previous work \cite{PPP16}.

Discrete breathers of the dKG equation (\ref{KG}) are approximated by
discrete solitons (standing localized waves) of the dNLS equation
(\ref{dnls-introduction}) in the form $a_n(\tau) = A_n e^{-\frac{i}{2}
  \Omega \tau}$, where the time-independent amplitudes satisfies the
stationary dNLS equation
\begin{equation}
\label{stat-dnls-introduction}
\Omega A_n + \gamma_p |A_n|^{2p} A_n = (\Delta A)_n, \quad n \in \ZZ^d.
\end{equation}
The elementary staggering transformation
\begin{equation}
\label{stag-transf}
A_n = (-1)^n \tilde{A}_n, \quad
\Omega = -4d - \tilde{\Omega}
\end{equation}
relates the defocusing version (\ref{stat-dnls-introduction}) to the
focusing version
\begin{equation}
\label{stat-dnls-focusing}
(\Delta \tilde{A})_n + \gamma_p |\tilde{A}_n|^{2p} \tilde{A}_n = \tilde{\Omega} \tilde{A}_n, \quad n \in \ZZ^d.
\end{equation}
In the recent past, existence and stability of discrete solitons in the focusing version
(\ref{stat-dnls-focusing}) has been studied in many details depending
on the exponent $p$ and the dimension $d$. Various approximations of
discrete solitons of the dNLS equation (\ref{stat-dnls-focusing}) are
described in \cite{James}.  Let us review some relevant results on
this subject.

The stationary dNLS equation (\ref{stat-dnls-focusing}) is the
Euler--Lagrange equation of the constrained variational problem
\begin{equation}
\label{constrained-var-pr}
\mathcal{E}_{\nu} = \inf_{a \in \ell^2(\ZZ^d)} \left\{ E(a) : \;\; N(a) = \nu \right\},
\end{equation}
where
\begin{equation}
E(a) = \sum_{n \in \ZZ^d} \sum_{|k-n|=1} |a_{k} - a_n|^2 - \frac{1}{p+1} \sum_{n \in \ZZ^d} |a_n|^{2p+2}
\end{equation}
is the conserved energy, $N(a) = \sum_{n \in \ZZ^d}
|a_n|^2$ is the conserved mass, and $\nu > 0$ is fixed. The
existence of a ground state as a minimizer of the constrained
variational problem (\ref{constrained-var-pr}) was proven in Theorem
2.1 in \cite{Wei99} for every $\mathcal{E}_{\nu} < 0$. By Theorem 3.1
in \cite{Wei99}, if $p<\frac{2}{d}$, the ground state exists for every
$\nu > 0$, however, if $p \geq \frac{2}{d}$, there exists an
excitation threshold $\nu_d > 0$ and the ground state only
exists for $\nu > \nu_d$.

Variational and numerical approximations for $d = 1$ were employed to
analyze the structure of discrete solitons of the stationary dNLS
equation (\ref{stat-dnls-focusing}) near the critical case $p = 2$
\cite{Mal1,Mal2}. It was shown for single-pulse solitons that although
the dependence $\Omega \mapsto \nu$ is monotone for $p = 1$, it
becomes non-monotone for $p \gtrapprox 1.5$ covering the whole range
$\nu > 0$ for $p < 2$ and featuring the excitation threshold
for $p \geq 2$. Further analytical estimates on the excitation
threshold in the stationary dNLS equation were developed in
\cite{Kar1,Kar2,Kar3}.

Spectral stability of discrete solitons in the dNLS equation (\ref{dnls-introduction})
was analyzed in the limit $\Omega \to \infty$, which can be recast as the anti-continuum limit
of the dNLS equation. It was shown for $d = 1$ in \cite{PKF05,PelS11}
(see Section 4.3.3 in \cite{Pelin2011}) that the single-pulse solitons
are stable in the limit $\Omega \to \infty$ for every $p \in
\mathbb{N}$.  Asymptotic stability of single-pulse solitons for $d =
1$ and $p \geq 3$ was also proven in the same limit in
\cite{Bambusi13} after similar asymptotical stability results were
obtained for small solitons of the dNLS equation in the presence of a
localized potential \cite{CT09,KPS09}.

Spectral and orbital stability of single-pulse discrete solitons in
the dNLS equation (\ref{dnls-introduction}) is determined by the
monotonicity of the dependence $\Omega \mapsto \nu$ according to the
Vakhitov--Kolokolov criterion \cite{KapProm2013,Pelin2011}.
It was shown in \cite{KCP16} that this criterion is related to a
similar energy criterion for spectral stability of discrete breathers
in the dKG equation (\ref{KG}). If $\omega$ is a frequency of the
discrete breathers and $H$ is the value of their energy, then the monotonicity of the
dependence $\omega \mapsto H$ is related to the monotonicity of the
dependence $\Omega \mapsto \nu$ in the dNLS limit.  Further results on
the energy criterion for spectral stability of discrete breathers in the dKG equation
(\ref{KG}) are given in \cite{V1,V2}. In spite of many convincing numerical
evidences, the orbital stability of single-pulse discrete breathers is
still out of reach in the energy methods.

The purpose of this paper is to make precise the correspondence
between existence and linear stability of discrete breathers in the
dKG equation (\ref{KG}) and discrete solitons in the dNLS equation
(\ref{dnls-introduction}). This work clarifies applications mentioned
in Section 4 of our previous paper \cite{PPP16}.  We show how the
Lyapunov--Schmidt reduction method can be employed equally well to
study existence and linear stability of small-amplitude discrete
breathers near the point of their bifurcation from the dNLS limit
under reasonable assumptions on existence and linear stability of the
discrete solitons of the dNLS equation. We also show how normal form
methods (see \cite{BamG93,Bam96,BGG89,PalP16,PalP16a}), combined with
the Lyapunov-Schmidt reduction, are implemented to provide higher
order approximation of the discrete breathers, in the same dNLS
limit. These results represent a considerable improvement with respect
to the corresponding results in~\cite{PalP16}. Long-time nonlinear
stability of small-amplitude discrete breathers then follows, assuming
the discrete soliton of the stationary dNLS equation \eqref{stat-dnls-focusing}
is a ground state of the variational problem \eqref{constrained-var-pr}.

The remainder of this paper consists of three sections.  Section 2
proves the existence of discrete breathers obtained via the
Lyapunov--Schmidt decomposition.  Section 3 describes the linear
stability results obtained by the extension of the same technique.
Section 4 gives the normal form arguments towards the long-time
nonlinear stability of small-amplitude breathers.

\section{Existence via Lyapunov-Schmidt decomposition}

Breathers are $T$-periodic solutions of the dKG equation (\ref{KG})
localized on the lattice. One can consider such strong solutions of
the dKG equation (\ref{KG}) in the space $u \in H^2_{\rm
  per}([0,T];\ell^2(\ZZ^d))$. By scaling the time variable as $\tau =
\omega t$ with $\omega = 2 \pi / T$, it is convenient to consider
$2\pi$-periodic solutions $U \in H^2_{\rm
  per}([-\pi,\pi];\ell^2(\ZZ^d))$ with parameter $\omega$, such that
$u(t) = U(\omega t)$. Breather solutions can be equivalently
represented by the Fourier series
\begin{equation}
  \label{FSsolution}
U(\tau) = \sum_{m\in\ZZ} A^{(m)} e^{\im m \tau}.
\end{equation}
Since $U$ is real, the complex-valued Fourier coefficients satisfy the
constraints:
\begin{equation}
\label{property-1}
A^{(m)} = \overline{A^{(-m)}}\ , \quad m \in \ZZ.
\end{equation}
If the periodic solution has zero initial velocity, i.e., $U'(0) = 0$,
then it follows from reversibility of the dKG equation
(\ref{KG}) in time\footnote{Given a solution $\gamma
  :=\graff{u(t),\dot{u}(t)}$ to the dKG equation (\ref{KG}), another solution is $\tilde\gamma =
  \graff{\tilde u(t)=u(-t),\widetilde{\dot{u}}(t) = -\dot{u}(-t)}$.
  If $\gamma$ is a periodic solution with initial zero velocity, then
  the same is true for $\tilde\gamma$, and since the two solutions
  have the same initial configuration $u(0) = \tilde u(0)$, they are
  solutions of the same Cauchy problem, hence they coincide.} that the
periodic solution is even in time, which implies
\begin{equation}
\label{property-2}
A^{(m)} = A^{(-m)}\ ,\quad  m \in \ZZ.
\end{equation}
As a consequence of the two symmetries, the Fourier coefficients are real,
hence the representation (\ref{FSsolution}) becomes Fourier cosine series
with real-valued coefficients:
\begin{equation}
\label{FS-cosine}
U(\tau) = A^{(0)} + 2 \sum_{m\in\NN} A^{(m)} \cos(m \tau).
\end{equation}

After the scaling transformation (\ref{e.scale.orig}), breather solutions to
the scaled dKG equation (\ref{e.KG.scaled}) satisfy the following boundary-value problem:
\begin{equation}
  \label{e.main.scaled}
\omega^2 U'' + U + \eps U^{1+2p} = \eps \Delta U, \quad U \in H^2_{\rm per}([-\pi,\pi];\ell^2(\ZZ^d)),
\end{equation}
\simone{where we use componentwise multiplication of sequences with
the following convention: $(U^{1+2p})_n = (U_n)^{1+2p}$. }
The existence problem (\ref{e.main.scaled}) can be rewritten in
real-valued Fourier coefficients as
\begin{equation}
    \label{e.fou.eq}
(1-m^2\omega^2) A^{(m)} + \frac{\eps}{2\pi} \int_{-\pi}^{\pi} U^{1+2p}(\tau) e^{-\im m\tau} d\tau = \eps \Delta A^{(m)}, \quad m \in \mathbb{N}_0.
\end{equation}

At $\eps=0$, bifurcation of breathers is expected at $\omega_m = 1/m$,
$m \in \mathbb{N}$, from which the lowest bifurcation value $\omega_1
= 1$ gives a branch of \simone{fundamental breathers with one oscillation on the period $[-\pi,\pi]$.} 
If the solution branch $\omega(\eps)$ and $\{ A^{(m)}(\eps) \}_{m \in
  \mathbb{N}} \in \ell^{2,2}(\ZZ;\ell^2(\ZZ^d))$ is parameterized by
$\eps$, then we are looking for the branch of fundamental breathers to
satisfy the limiting conditions:
\begin{equation}
\label{breather-limit}
\lim_{\eps \to 0} \omega(\eps) = 1,  \quad \lim_{\eps \to 0} A^{(1)}(\eps) \neq 0, \quad
\mbox{\rm and} \quad
\lim_{\eps \to 0} A^{(m)}(\eps) = 0, \quad m \neq 1.
\end{equation}
The limiting conditions (\ref{breather-limit}) are not sufficient for
persistence argument.  In order to define uniquely a continuation of
the solution branch in $\eps$, we consider the stationary dNLS
equation in the form:
\begin{equation}
\label{e.dnls.1}
\Omega \mathcal{A} + \gamma_p |\mathcal{A}|^{2p} \mathcal{A} = \Delta \mathcal{A}, \quad \mathcal{A} \in \ell^2(\ZZ^d),
\end{equation}
where $\Omega$ is parameter and $\gamma_p$ is a numerical coefficient
given by (\ref{gamma-p}).  We restrict consideration to the case of
dNLS solitons given by real $\mathcal{A}$, for which we introduce the
Jacobian operator for the stationary dNLS equation (\ref{e.dnls.1}) at
$\mathcal{A}$:
\begin{equation}
\label{Jacobian-dnls}
J_{\Omega} := \Omega + (1+2p) \gamma_p |\mathcal{A}|^{2p} - \Delta.
\end{equation}
Since $\sigma(\Delta) = [-4d,0]$ in $\ell^2(\ZZ^d)$ and $\mathcal{A}
\in \ell^2(\ZZ^d)$ is expected to decay exponentially at infinity, we
need to consider $\Omega$ in $\RR \backslash [-4d,0]$.

\begin{remark}
Since the discrete solitons in the focusing stationary dNLS equation
(\ref{stat-dnls-focusing}) exist for $\tilde{\Omega} > 0$ \cite{Wei99}, the
staggering transformation (\ref{stag-transf}) suggests that the
discrete solitons in the defocusing stationary dNLS equation
(\ref{e.dnls.1}) exist for $\Omega < -4d$.
\label{rem-existence}
\end{remark}

Assuming existence of a dNLS soliton $\mathcal{A}$ in the stationary
dNLS equation (\ref{e.dnls.1}) for some $\Omega \in \RR \backslash
[-4d,0]$ and invertibility of $J_{\Omega}$ at this $\mathcal{A}$ in
(\ref{Jacobian-dnls}), we will prove existence and uniqueness of the
branch $\omega(\eps)$ and $\{ A^{(m)}(\eps) \}_{m \in \mathbb{N}} \in
\ell^{2,2}(\ZZ;\ell^2(\ZZ^d))$ of fundamental breathers satisfying the
limiting conditions:
\begin{equation}
\label{breather-limit-advanced}
\lim_{\eps \to 0} \frac{\omega(\eps) - 1}{\eps} = -\frac{1}{2} \Omega, \qquad
\lim_{\eps \to 0} A^{(m)}(\eps) = \left\{ \begin{array}{ll}
\mathcal{A}, \quad & m = 1, \\ 0, \quad & m \neq 1. \end{array} \right.
\end{equation}
The following theorem gives the existence and uniqueness result for breathers.

\begin{theorem}
\label{theorem-existence}
Fix $p \in \mathbb{N}$. Assume the existence of real $\mathcal{A} \in
\ell^2(\ZZ^d)$ in the stationary dNLS equation (\ref{e.dnls.1}) for
some $\Omega \in \RR \backslash [-4d,0]$ such that the Jacobian
operator $J_{\Omega}$ at this $\mathcal{A}$ in (\ref{Jacobian-dnls})
has trivial null space in $\ell^2(\ZZ^d)$. There exists $\eps_0 > 0$
and $C_0 > 0$ such that the breather equation (\ref{e.fou.eq}) for
every $\eps \in (0,\eps_0)$ admits a unique $C^{\omega}$
\simone{(analytic)} solution branch $\omega(\eps) \in \mathbb{R}$
and $\{ A^{(m)}(\eps) \}_{m \in \mathbb{N}} \in
\ell^{2,2}(\ZZ;\ell^2(\ZZ^d))$ satisfying the bounds
\begin{equation}
\label{error-omega}
\left| \omega(\eps) - 1 + \frac{1}{2} \eps \Omega \right| \leq C_0 \eps^2
\end{equation}
and
\begin{equation}
\label{error-A}
\| A^{(0)} \|_{\ell^2(\ZZ^d)} + \| A^{(1)} - \mathcal{A} \|_{\ell^2(\ZZ^d)} + \sum_{m \geq 2} \| A^{(m)} \|_{\ell^2(\ZZ^d)} \leq C_0 \eps,
\end{equation}
for every $\eps \in (0,\eps_0)$.
\end{theorem}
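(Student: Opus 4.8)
\emph{Overall approach.} I would prove this by a Lyapunov--Schmidt reduction of the breather equation (\ref{e.fou.eq}) carried out in the Fourier index $m$, isolating the single resonant harmonic $m=1$, together with the analytic implicit function theorem; the dNLS soliton $\mathcal{A}$ of (\ref{e.dnls.1}) will emerge as the solution of the reduced equation at $\eps=0$, and the Jacobian $J_{\Omega}$ of (\ref{Jacobian-dnls}) as the linearisation governing that reduction. First I would set up the functional framework: identify the coefficient space $\ell^{2,2}(\ZZ;\ell^2(\ZZ^d))$ with $H^2_{\rm per}([-\pi,\pi];\ell^2(\ZZ^d))$ and record that the latter is a Banach algebra under componentwise multiplication (because $H^2_{\rm per}([-\pi,\pi])\hookrightarrow C^0$ in the time variable and $\ell^2\cdot\ell^\infty\subset\ell^2\subset\ell^\infty$ in the lattice variable), so that the polynomial Nemytskii operator $U\mapsto U^{1+2p}$ is real-analytic on it. Then I would rescale the frequency by $\omega^2=1-\eps\Omega+\eps^2\rho$, so that $1-\omega^2=\eps(\Omega-\eps\rho)$ with $\rho$ a new real unknown; the residual scalar freedom (reflecting that breathers of the autonomous dKG equation form a one-parameter family parametrised by the frequency) I would remove by appending a transversality normalisation, e.g.\ $\langle A^{(1)}-\mathcal{A},\,|\mathcal{A}|^{2p}\mathcal{A}\rangle_{\ell^2(\ZZ^d)}=0$, whose nondegeneracy uses only invertibility (and self-adjointness) of $J_{\Omega}$.

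\emph{Eliminating the non-resonant modes.} Because the dKG dispersion relation involves $\omega^2$, the coefficients $1-m^2\omega^2$ stay bounded away from zero uniformly in $m\in\mathbb{N}_0\setminus\{1\}$ and in small $\eps$ (in fact $|1-m^2\omega^2|\geq c\,(1+m^2)$ for some $c>0$), so $m=1$ is the only resonance and no small divisors occur. Hence the linear operator $\{A^{(m)}\}_{m\neq1}\mapsto\{(1-m^2\omega^2)A^{(m)}-\eps\Delta A^{(m)}\}_{m\neq1}$ is, for $\eps$ small, boundedly invertible from $\ell^{2,2}$ onto $\ell^{2,0}$, the inverse recovering the two weights absorbed by the $m^2$-growth. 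Since in the equations with $m\neq1$ the nonlinearity and the coupling carry an overall factor $\eps$, and the Fourier coefficients of $U^{1+2p}$ are controlled in $\ell^{2,0}$ by $\|U\|_{\ell^{2,2}}^{1+2p}$ via the algebra property, a contraction argument (equivalently the analytic implicit function theorem exploiting the small factor $\eps$) yields a unique real-analytic map $\{A^{(m)}\}_{m\neq1}=\mathcal{G}(A^{(1)},\rho,\eps)$ with $\mathcal{G}(\,\cdot\,,\,\cdot\,,0)=0$ and $\|\mathcal{G}(A^{(1)},\rho,\eps)\|_{\ell^{2,2}}\leq C\eps$ on bounded sets; in particular $\|A^{(0)}\|_{\ell^2(\ZZ^d)}+\sum_{m\geq2}\|A^{(m)}\|_{\ell^2(\ZZ^d)}=O(\eps)$.

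\emph{The reduced equation is the stationary dNLS equation.} Substituting $\mathcal{G}$ into the $m=1$ equation and dividing by $\eps$ gives the reduced equation $\Phi(A^{(1)},\rho,\eps):=(\Omega-\eps\rho)A^{(1)}-\Delta A^{(1)}+\frac{1}{2\pi}\int_{-\pi}^{\pi}\big[U(A^{(1)},\mathcal{G}(A^{(1)},\rho,\eps),\tau)\big]^{1+2p}\,e^{-\im\tau}\,d\tau=0$ in $\ell^2(\ZZ^d)$. At $\eps=0$ one has $U=2A^{(1)}\cos\tau$, and expanding $(2\cos\tau)^{1+2p}$ and using $\binom{1+2p}{p}=\gamma_p$ together with the reality of $A^{(1)}$ shows the integral equals $\gamma_p|A^{(1)}|^{2p}A^{(1)}$; hence $\Phi(\,\cdot\,,\rho,0)$ is precisely the stationary dNLS operator of (\ref{e.dnls.1}) and $\Phi(\mathcal{A},\rho,0)=0$ for the given soliton, while its Fr\'echet derivative in $A^{(1)}$ at $(\mathcal{A},\rho,0)$ is exactly $J_{\Omega}$. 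Since $\mathcal{A}\in\ell^2(\ZZ^d)$ forces $|\mathcal{A}_n|^{2p}\to0$, multiplication by $(1+2p)\gamma_p|\mathcal{A}|^{2p}$ is compact on $\ell^2(\ZZ^d)$, and $\Omega-\Delta$ is boundedly invertible for $\Omega\in\RR\setminus[-4d,0]$ (its spectrum being $[\Omega,\Omega+4d]$); therefore $J_{\Omega}$ is Fredholm of index $0$, and, having trivial null space by hypothesis, it is boundedly invertible. The analytic implicit function theorem then produces a unique real-analytic $A^{(1)}=A^{(1)}(\rho,\eps)$ near $\mathcal{A}$ with $A^{(1)}(\rho,0)=\mathcal{A}$; inserting it into the normalisation and solving the resulting scalar bifurcation equation (which, after factoring out $\eps$, has nonvanishing $\rho$-derivative thanks to the chosen normalisation and the invertibility of $J_{\Omega}$) determines a unique real-analytic $\rho=\rho(\eps)$, hence $\omega(\eps)=\sqrt{1-\eps\Omega+\eps^2\rho(\eps)}=1-\frac12\eps\Omega+O(\eps^2)$.

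\emph{Conclusion and main obstacle.} Assembling the three steps yields, for every sufficiently small $\eps>0$, a unique real-analytic branch $(\omega(\eps),\{A^{(m)}(\eps)\})$ of $2\pi$-periodic even breathers with $A^{(1)}=\mathcal{A}+O(\eps)$, $A^{(0)}=O(\eps)$, and $\sum_{m\geq2}\|A^{(m)}\|_{\ell^2(\ZZ^d)}=O(\eps)$; Taylor-expanding at $\eps=0$ gives the quantitative bounds (\ref{error-omega}) and (\ref{error-A}) with a uniform $C_0$, and uniqueness within the stated $O(\eps)$ / $O(\eps^2)$ neighbourhood follows from the uniqueness in each contraction and each implicit function step. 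I expect the main obstacle to be the singular character of the limit $\eps\to0$: the entire linear part of the $m=1$ equation degenerates, so the naive linearisation of the full system at $\eps=0$ has infinite-dimensional kernel and cokernel and the plain finite-dimensional Lyapunov--Schmidt reduction does not apply; the scheme closes only after the frequency rescaling, which is exactly what converts the resonant equation into the nondegenerate stationary dNLS problem and promotes $J_{\Omega}$ to the operator controlling the reduction. The secondary technical point is the uniform-in-$m$ bookkeeping of the non-resonant tail, which rests on the absence of small divisors beyond $m=1$.
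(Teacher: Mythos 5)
Your overall strategy --- Lyapunov--Schmidt in the Fourier index $m$, elimination of the non-resonant harmonics using the uniform lower bound $|1-m^2\omega^2|\geq c(1+m^2)$ for $m\neq\pm1$, identification of the reduced $m=1$ equation at $\eps=0$ with the stationary dNLS equation (\ref{e.dnls.1}), and the analytic implicit function theorem with $J_{\Omega}$ as the controlling linearisation --- is exactly the paper's proof, and those parts of your argument are sound (your added observation that $J_\Omega$ is Fredholm of index zero, so trivial kernel implies bounded invertibility, is a useful explicit justification of what the paper leaves implicit).

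Where you deviate is in the treatment of the frequency, and there your argument has a genuine gap. You promote $\omega$ to an unknown via $\omega^2=1-\eps\Omega+\eps^2\rho$ and restore a square system by appending the normalisation $\langle A^{(1)}-\mathcal{A},|\mathcal{A}|^{2p}\mathcal{A}\rangle=0$, claiming its nondegeneracy ``uses only invertibility (and self-adjointness) of $J_\Omega$.'' It does not. Writing $A^{(1)}(\rho,\eps)=\mathcal{A}-\eps J_\Omega^{-1}\bigl(R_0-\rho\mathcal{A}\bigr)+\mathcal{O}(\eps^2)$ from your reduced equation, the scalar bifurcation equation for $\rho$ (after dividing by $\eps$) reads
\begin{equation*}
\rho\,\bigl\langle J_\Omega^{-1}\mathcal{A},\,|\mathcal{A}|^{2p}\mathcal{A}\bigr\rangle_{\ell^2(\ZZ^d)}=\bigl\langle J_\Omega^{-1}R_0,\,|\mathcal{A}|^{2p}\mathcal{A}\bigr\rangle_{\ell^2(\ZZ^d)}+\mathcal{O}(\eps),
\end{equation*}
so solvability requires $\langle J_\Omega^{-1}\mathcal{A},|\mathcal{A}|^{2p}\mathcal{A}\rangle\neq 0$, a quantity that can vanish even when $J_\Omega$ is invertible and self-adjoint; no choice of normalisation removes this extra hypothesis. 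The paper avoids the issue entirely: it fixes $\omega^2=1-\eps\Omega$ \emph{exactly}, with $\Omega$ an external parameter (so $\omega(\eps)$ is prescribed, not solved for), and performs the implicit function theorem only in $A^{(1)}$; the bound (\ref{error-omega}) is then just the Taylor expansion of $\sqrt{1-\eps\Omega}$, and uniqueness is understood relative to this parametrisation. Dropping $\rho$ and the normalisation condition from your scheme reduces it to the paper's argument and closes the gap.
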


\begin{remark}
In order to explain the relevance of the stationary dNLS equation
(\ref{e.dnls.1}), we set $\omega^2 = 1 - \eps \Omega$, where $\Omega$ is fixed
independently of $\eps$, and rewrite equation
\eqref{e.fou.eq} for $m = 1$ after dividing it by $\eps$.  This
procedure yields the bifurcation equation:
\begin{displaymath}
\Omega A^{(1)}(\eps) + \frac1{2\pi}\int_{-\pi}^\pi U^{1+2p}(\tau,\eps)e^{-\im \tau} d\tau = \Delta A^{(1)}(\eps),
\end{displaymath}
where $U(\tau,\eps)$ is given by the Fourier series (\ref{FSsolution})
with amplitudes $\{A^{(m)}(\eps) \}_{m \in \mathbb{Z}}$ satisfying
symmetries (\ref{property-1}) and (\ref{property-2}).  Formally, at
the leading order (\ref{breather-limit}), we have:
\begin{eqnarray}
\label{eq-Omega}
\Omega  A^{(1)}(0) + \frac1{2\pi}\int_{-\pi}^\pi
    \left[ A^{(1)}(0) e^{\im \tau} +
  \overline{A^{(1)}(0)}e^{-\im \tau} \right]^{1+2p} e^{-\im \tau}d\tau = \Delta A^{(1)}(0)
\end{eqnarray}
By expanding
\begin{eqnarray*}
\left[ A^{(1)}(0) e^{\im \tau} +
  \overline{A^{(1)}(0)}e^{-\im \tau} \right]^{1+2p} =  \sum_{k=0}^{1+2p}
    \binom{1+2p}{k} \tond{A^{(1)}(0)}^k
\tond{\overline{A^{(1)}(0)}}^{1+2p-k} e^{\im (2k-2p-1)\tau}
\end{eqnarray*}
and evaluating the integral in (\ref{eq-Omega}),  \simone{we get the only nonzero term
for $k = p+1$. As a result, equation (\ref{eq-Omega}) becomes}  the limiting dNLS equation
(\ref{e.dnls.1}) with $\mathcal{A} := A^{(1)}(0)$. \label{remark-dnls}
\end{remark}

\begin{proof}
In order to solve the breather equation \eqref{e.fou.eq} as $\eps \to
0$ near the limiting solution (\ref{breather-limit}), we proceed with
the classical Lyapunov-Schmidt decomposition \simone{\cite{AmbPro} (see
applications of this method in a similar context in \cite{BamPP10,PelS12})}.
We introduce the Hilbert spaces
\begin{displaymath}
X_2:= H^2_{\rm per}([-\pi,\pi];\ell^2(\ZZ^d)),\qquad\qquad X_0:= L^2_{\rm per}([-\pi,\pi];\ell^2(\ZZ^d))
\end{displaymath}
and the dual spaces under the Fourier series (\ref{FSsolution}):
\begin{displaymath}
\hat{X}_2:= \ell^{2,2}(\ZZ;\ell^2(\ZZ^d)),\qquad\qquad \hat{X}_0:= \ell^2(\ZZ;\ell^2(\ZZ^d)).
\end{displaymath}
The breather solution $U$ is an element of $X_2$, which is uniquely
identified by the sequence $A$ in $\hat{X}_2$. In other words, a
solution is given by a sequence of Fourier coefficients
$\graff{A^{(m)}}_{m \in \ZZ}$ in $\ell^{2,2}(\ZZ)$, where each Fourier
coefficient $A^{(m)}$ is a complex sequence $A^{(m)} =
\graff{A^{(m)}_n}_{n\in\ZZ^d}$ in $\ell^2(\ZZ^d)$. The Sobolev norm in
space $\hat{X}_2$ is given by
\begin{displaymath}
\norm{A}_{\hat{X}_2} = \left( \sum_{m\in\ZZ}(1+|m|^2)^2\norm{A^{(m)}}_{\ell^2(\ZZ^d)}^2 \right)^{1/2}.
\end{displaymath}
Let us introduce also the linear operator $L_{\omega} : X_2
\rightarrow X_0$, which is given in Fourier space by $\hat{L}_{\omega}
: \hat{X}_2 \rightarrow \hat{X}_0$:
\begin{equation}
\label{e.l.omega}
\tond{\hat{L}_\omega A}^{(m)} = (1-m^2\omega^2)A^{(m)}, \quad m \in \ZZ.
\end{equation}

\simone{Let $\{ e_n \}_{n \in \ZZ^d}$ be a set of unit vectors in $\ell^2(\ZZ^d)$.
We define the linear subspace $V_2 = {\rm span}(\cup_{n \in \ZZ^d} e_n e^{\im \tau},
\cup_{n \in \ZZ^d} e_n e^{-\im \tau})$ as the
kernel of $L_{\omega = 1}$ in $X_2$ and the linear subspace $W_2$ as its orthogonal
complement in $X_2=V_2\oplus W_2$.} In the Fourier space, we set
$\hat{V}_2$ as a kernel of $\hat{L}_{\omega = 1}$ in $\hat{X}_2$ and
$\hat{W}_2$ as its orthogonal complement in $\hat{X}_2 = \hat{V}_2 \oplus
\hat{W}_2$.  In a similar way, we introduce the range subspace $W_0$
for the operator $L_{\omega = 1}$, which is \simone{a subspace of $X_0$ 
and is orthogonal to $V_2$, so that $X_0 = V_2 \oplus W_0$}, and similarly $\hat{X}_0 = \hat{V}_2 \oplus
\hat{W}_0$.  Any element of $\hat{X}_2$ can be decomposed into
\begin{equation}
\label{decom-A}
A = A^\sharp + A^\flat\ ,\qquad A^\sharp\in \hat{V}_2\ ,\qquad A^\flat\in
\hat{W}_2\ .
\end{equation}

The breather equation \eqref{e.fou.eq} can be written in the abstract form:
\begin{equation}
\label{e.fou.eq.2}
F(A,\omega,\eps) := \hat{L}_\omega A + \eps N(A) - \eps \Delta A = 0\ ,
\end{equation}
where $N(A)$ is the nonlinear term. If $p \in \mathbb{N}$, then the
nonlinear map $F(A,\omega,\eps) : \hat{X}_2 \times \mathbb{R} \times
\mathbb{R} \rightarrow \hat{X}_0$ is $C^{\omega}$ in its variables.
The nonlinear equation (\ref{e.fou.eq.2}) is projected onto
$\hat{V}_0$ and $\hat{W}_0$, thus yields the following two equations:
\begin{equation}
\label{kernel-range}
\Pi_{\hat{V}_0} F(A^\sharp + A^\flat,\omega,\eps)= 0\ ,\qquad\qquad \Pi_{\hat{W}_0} F(A^\sharp + A^\flat,\omega,\eps)= 0\ .
\end{equation}
The former one is known as the kernel equation and the latter one is
known as the range equation.  We shall solve the range equation for
small $\eps$ assuming that $|\omega - 1| = \mathcal{O}(\eps)$ by using
the implicit function theorem.

Exploiting the fact that $\hat{V}_0$ and $\hat{W}_0$ are invariant
under $\Delta$ and that $\hat{L}_{\omega=1} A^\sharp=0$ by definition,
the range equation in (\ref{kernel-range}) takes the form
\begin{equation}
\label{range-eq}
\tond{\hat{L}_{\omega}-\eps\Delta}A^\flat + \eps \Pi_{\hat{W}_0} N(A^\sharp+A^\flat)= 0\ .
\end{equation}
The perturbed linear operator $\hat{L}_\omega-\eps\Delta$ can be
inverted on $\hat{W}_0$ for $\eps$ small enough if $|\omega - 1| =
\mathcal{O}(\eps)$.  Indeed, first write using Neumann series
\begin{equation}
  \label{neumann-formula}
  \tond{\hat{L}_\omega-\eps\Delta}^{-1} =
  \quadr{\simone{\sum_{k=0}^{\infty}} \tond{\eps \hat{L}_\omega^{-1}\Delta}^k} \hat{L}_\omega^{-1}\ ,
\end{equation}
where $\hat{L}_\omega^{-1}$ is well defined on $\hat{W}_0$ thanks to
the diagonal form:
\begin{displaymath}
(\hat{L}_\omega^{-1} A)^{(m)} = \frac1{1-m^2\omega^2}A^{(m)}\ ,\qquad m\neq
  \pm1\ .
\end{displaymath}
Let us introduce a parametrization of $\omega$ by
\begin{equation}
\label{param-omega}
\omega^2 = 1 - \eps \Omega,
\end{equation}
where $\Omega$ is fixed independently of $\eps$. It follows by elementary
computation that there exists $\epsilon_*(\Omega)$ that only depends
on $\Omega$ such that for every $\eps \in (0,\eps_*(\Omega))$,
\begin{displaymath}
|1-m^2 \omega^2 | = |1 - m^2 (1-\eps \Omega)| > \frac12 (1+m^2)\ , \quad \forall m \in \ZZ \backslash \{-1,1\},
\end{displaymath}
thus obtaining the estimate
\begin{displaymath}
\|\hat{L}_\omega^{-1}\|_{\hat{W}_0 \rightarrow \hat{W}_2} \leq 2,
\end{displaymath}
and consequently
\begin{displaymath}
\| \eps \hat{L}_\omega^{-1} \Delta \|_{\hat{W}_0 \rightarrow \hat{W}_2} \leq 8 d \eps.
\end{displaymath}
By Neumann formula (\ref{neumann-formula}) there exists \simone{$\eps_0 > 0$ and $C_0 > 0$
such that for every $\eps \in (0,\eps_0)$,
\begin{equation}
\label{bound-on-inverse}
\|(\hat{L}_\omega - \eps \Delta)^{-1}\|_{\hat{W}_0 \rightarrow \hat{W}_2} \leq C_0.
\end{equation}
For example, we can take $\eps_0:= \min\{ \eps^*(\Omega),(8d)^{-1}\}$.}

Since $X_2$ is a Banach algebra with respect to \simone{componentwise multiplication
of sequence of functions} and
$\hat{X}_2$ is a Banach algebra with respect to convolution, the
nonlinear term $N(A)$ in (\ref{range-eq}) is closed in $\hat{X}_2$. By
writing the range equation as the fixed-point equation for $A^\flat$:
\begin{equation}
\label{fixed-point-eq}
A^\flat = -\eps \left( \hat{L}_{\omega}-\eps \Delta \right)^{-1} \Pi_{\hat{W}_0} N(A^\sharp+A^\flat)
\end{equation}
and using the implicit function theorem thanks to the parametrization
(\ref{param-omega}) and the uniform bound (\ref{bound-on-inverse}), we
conclude that for every $\eps \in (0,\eps_0)$, $\Omega \in
\mathbb{R}$, and $A^{\sharp} \in \hat{V}_2 \subset \hat{X}_2$, there
exists a unique solution $A^\flat \in \hat{W}_2 \subset \hat{X}_2$ to
the fixed-point equation (\ref{fixed-point-eq}) such that the mapping
$(A^{\sharp},\Omega,\epsilon) \rightarrow A^{\flat}$ is $C^{\omega}$
and the solution is as small as $\mathcal{O}(\eps)$ thanks to the
leading order approximation
\begin{equation}
\label{e.Aflat.exp}
A^{\flat} = -\eps \hat{L}_{\omega}^{-1}\Pi_{\hat W_0}N(A^\sharp) + \Oscr(\eps^2)\ ,
\end{equation}
which provides the bound
\begin{equation}
\label{e.Aflat.app}
\| A^\flat \|_{\hat{X}_2} \leq C \eps,
\end{equation}
for some $\eps$-independent $C$.

Inserting the parametrization (\ref{param-omega}) and the mapping
$(A^{\sharp},\Omega,\epsilon) \rightarrow A^{\flat}$ into the kernel
equation in (\ref{kernel-range}) and dividing by $\eps$, we obtain
\begin{displaymath}
\Omega A^\sharp - \Delta A^\sharp + \Pi_{\hat{V}_0} N(A^\sharp + A^{\flat}(A^{\sharp},\Omega,\epsilon)) = 0.
\end{displaymath}
Thanks to the computations in Remark \ref{remark-dnls} and the bound
\eqref{e.Aflat.app}, one can rewrite the kernel equation explicitly in
terms of the real-valued amplitude $A^{(1)}$ as follows:
\begin{equation}
f(A^{(1)},\Omega,\eps) := \Omega A^{(1)} - \Delta A^{(1)} + \gamma_p A^{(1)}|A^{(1)}|^{2p} + \eps R(A^{(1)},\Omega,\eps) = 0,
\label{e.ker.1}
\end{equation}
where $R(A^{(1)},\Omega,\eps) : \ell^2(\ZZ^d) \times \mathbb{R} \times
\mathbb{R} \to \ell^2(\ZZ^d)$ is $C^{\omega}$ and bounded as $\eps \to
0$ thanks to the bound (\ref{e.Aflat.app}). Thanks to the assumptions
of the theorem, $\mathcal{A} \in \ell^2(\ZZ^d)$ is a root of
\begin{equation}
\label{e.A.exists}
f(\mathcal{A},\Omega,0)=0
\end{equation}
and
\begin{equation}
\label{e.L+}
D_{A^{(1)}} f (\mathcal{A},\Omega,0) = J_{\Omega}
\end{equation}
is a bounded and invertible operator on $\ell^2(\ZZ^d)$.
By the implicit function theorem, there exists $\eps_1<\eps_0$ such
that for every $\eps \in (0,\eps_1)$ and $\Omega \in \mathbb{R}$ for
which $\mathcal{A} \in \ell^2(\ZZ^d)$ exists in (\ref{e.A.exists}) and
$J_{\Omega}$ is invertible in (\ref{e.L+}), there exists a unique
solution $A^{(1)} \in \ell^2(\ZZ^d)$ to the kernel equation
(\ref{e.ker.1}) such that the mapping $(\Omega,\epsilon) \rightarrow
A^{(1)}$ is $C^{\omega}$ and the solution satisfies the bound
\begin{equation}
\label{e.A.solution}
\| A^{(1)} - \mathcal{A} \|_{\ell^2(\ZZ^d)} \leq C \eps,
\end{equation}
for some $\eps$-independent $C$. Combining (\ref{e.Aflat.app}) and
(\ref{e.A.solution}) with the decompositions (\ref{decom-A}) and
(\ref{param-omega}) yields bounds (\ref{error-omega}) and
(\ref{error-A}).
\end{proof}

\section{Stability via Lyapunov-Schmidt decomposition}

Linearizing $u(t) = U(\tau) + w(t)$ of the dKG equation
(\ref{e.KG.scaled}) at the breather solution $U \in H^2_{\rm
  per}([-\pi,\pi];\ell^2(\ZZ^d))$ with $\tau = \omega t$ yields the
linearized dKG equation:
\begin{equation}
  \label{lin-KG}
\ddot{w} + w + \eps (1+2p) U^{2p} w = \eps \Delta w.
\end{equation}
By Floquet theorem, every solution of the $2\pi$-periodic linear
equation (\ref{lin-KG}) can be represented in the form $w(t) = W(\tau)
e^{\lambda t}$, where $\lambda \in \mathbb{C}$ is the spectral
parameter and $W \in H^2_{\rm per}([-\pi,\pi];\ell^2(\ZZ^d))$ is
an eigenfunction of the spectral problem:
\begin{equation}
  \label{spectral-KG}
\omega^2 W'' + 2 \lambda \omega W' + \lambda^2 W + W + \eps (1+2p) U^{2p} W = \eps \Delta W.
\end{equation}
Let us represent $W \in H^2_{\rm per}([-\pi,\pi];\ell^2(\ZZ^d))$
by the Fourier series:
\begin{equation}
  \label{FSsolution-W}
W(\tau) = \sum_{m\in\ZZ} B^{(m)} e^{\im m \tau}.
\end{equation}
With the help of (\ref{FSsolution}) and (\ref{FSsolution-W}), the
spectral problem (\ref{spectral-KG}) is rewritten in
Fourier coefficients as
\begin{equation}
\label{Fourier-KG}
\left[ 1 + (\lambda + \im m\omega)^2 \right] B^{(m)} +
\frac{\eps (1+2p)}{2\pi} \int_{-\pi}^{\pi} U^{2p}(\tau) W(\tau) e^{-\im m\tau} d\tau = \eps \Delta B^{(m)}.
\end{equation}
No symmetry reductions exist generally for the Fourier coefficients
$\{ B^{(m)} \}_{m \in \ZZ}$.

At $\eps=0$ and $\omega = 1$, the spectral problem (\ref{Fourier-KG})
admits a double set of eigenvalues $\lambda$ defined by
\begin{equation}
\label{points-spectral}
\Sigma_{\pm} := \left\{ \im (\pm 1 - m), \quad m \in \mathbb{Z} \right\},
\end{equation}
where $\Sigma_+ = \Sigma_-$ and each eigenvalue has infinite
multiplicity due to the lattice $\ZZ^d$.  In terms of the Floquet
multipliers
\begin{equation}
\label{Floquet}
\mu := e^{\lambda T} = e^{2\pi \lambda/\omega},
\end{equation}
all eigenvalues at $\epsilon = 0$ and $\omega = 1$ correspond to the
same Floquet multiplier $\mu = 1$.

\begin{remark}
The degeneracy of the Floquet multiplier $\mu$ in (\ref{Floquet}) is understood in terms of the following symmetry
for the spectral problem (\ref{Fourier-KG}). Fix $k \in \mathbb{Z}$
and apply transformation
$$
\lambda = \im k + \tilde{\lambda}, \quad m = -k + \tilde{m}, \quad B^{(m)} = \tilde{B}^{(\tilde{m})}.
$$
The eigenvalue-eigenvector pair $\left( \tilde{\lambda},\{
\tilde{B}^{(\tilde{m})}\}_{\tilde{m} \in \ZZ}\right)$ satisfies the
same spectral problem (\ref{Fourier-KG}) but in tilde
variables. Therefore, the spectral problem (\ref{Fourier-KG}) near
every nonzero point $\lambda \in \Sigma_{\pm}$ repeats its behavior
near $\lambda = 0$. It is hence sufficient to consider the
spectral problem (\ref{Fourier-KG}) near $\lambda = 0$.
\end{remark}

Let us review the spectral stability problem for the dNLS equation (\ref{dnls-introduction}).
The dNLS soliton $a(\tau) = e^{-\frac{i}{2} \Omega \tau} \mathcal{A}$ is defined by solutions
of the stationary dNLS equation (\ref{e.dnls.1}) with real $\mathcal{A} \in
\ell^2(\ZZ^d)$. Linearizing with the expansion $a(\tau) = e^{-\frac{i}{2} \Omega \tau}
\left[ \mathcal{A} + b(\tau) \right]$ yields the linearized dNLS equation:
\begin{equation}
\label{lin-dnls}
2i b' + \left( \Omega - \Delta + \gamma_p (p+1) \mathcal{A}^{2p} \right) b + \gamma_p p \mathcal{A}^{2p} \bar{b} = 0.
\end{equation}
Separating variables by
\simone{
$$
b(\tau) = (b_+ + \im b_-) e^{\Lambda \tau} + (\bar{b}_+ + \im \bar{b}_-) e^{\bar{\Lambda} \tau}
$$
and
$$
\bar{b}(\tau) = (b_+ - \im b_-) e^{\Lambda \tau} + (\bar{b}_+ - \im \bar{b}_-) e^{\bar{\Lambda} \tau},
$$
where $\Lambda \in \mathbb{C}$ is the spectral parameter and $(b_+,b_-) \in
\ell^2(\ZZ^d) \times \ell^2(\ZZ^d)$ is a complex-valued eigenfunction}, yields the
spectral problem:
\begin{equation}
  \label{spectral-NLS}
\left[ \begin{array}{cc} 0 & -(\Omega - \Delta + \gamma_p \mathcal{A}^{2p}) \\
\Omega - \Delta + \gamma_p (1+2p) \mathcal{A}^{2p} & 0 \end{array} \right] \left[ \begin{array}{c} b_+ \\ b_- \end{array} \right] =
2 \Lambda \left[ \begin{array}{c} b_+ \\ b_- \end{array} \right].
\end{equation}
The spectral problem (\ref{spectral-NLS}) can be written in
the Hamiltonian form $\mathcal{J} \mathcal{H}''(\mathcal{A}) \vec{f} =
2 \Lambda \vec{f}$, where $\vec{f} = (b_+,b_-)^T$,
$$
\mathcal{J} = \left[ \begin{array}{cc} 0 & -1 \\ 1 & 0 \end{array} \right], \quad
\mathcal{H}''(\mathcal{A}) = \left[ \begin{array}{cc} \Omega - \Delta + \gamma_p (1+2p) \mathcal{A}^{2p} & 0 \\
0 & \Omega - \Delta + \gamma_p \mathcal{A}^{2p} \end{array} \right].
$$
The first diagonal entry in $\mathcal{H}''(\mathcal{A})$ coincides
with the Jacobian operator (\ref{Jacobian-dnls}) for the stationary
dNLS equation (\ref{e.dnls.1}).

\begin{remark}
Since $\mathcal{H}''(\mathcal{A})$ and $\Omega - \Delta$ are bounded
operators in $\ell^2(\ZZ^d)$, whereas $\Omega \in \RR \backslash
[-4d,0]$ and $\mathcal{A}^{2p}$ decays exponentially at infinity, the
operator $(\Omega - \Delta)^{-1} \mathcal{A}^{2p}$ is a compact
(Hilbert--Schmidt) operator.  As a result,
$\sigma_c(\mathcal{H}''(\mathcal{A})) =
[\Omega,\Omega + 4d]$ and $\sigma_d(\mathcal{H}''(\mathcal{A}))$
consists of finitely many eigenvalues of finite multiplicities, where
$\sigma_c$ and $\sigma_d$ denotes the absolutely continuous and
discrete spectra of the self-adjoint operator
$\mathcal{H}''(\mathcal{A})$ in the Hilbert space $\ell^2(\ZZ^d)$.
\label{rem-stability}
\end{remark}

It follows from Remark \ref{rem-stability} that if $\Omega < -4d$ (see
Remark \ref{rem-existence}), there exist finitely many positive
eigenvalues of $\sigma_d(\mathcal{H}''(\mathcal{A}))$, whereas if
$\Omega > 0$, there exist finitely many negative eigenvalues of
$\sigma_d(\mathcal{H}''(\mathcal{A}))$.  In either case, the stability
theory in linear Hamiltonian systems \cite{KapProm2013,Pelin2011} is
applied to conclude that there exist finitely many eigenvalues
$\Lambda$ with ${\rm Re}(\Lambda) \neq 0$ in the spectral problem
(\ref{spectral-NLS}). The continuous spectrum of $\mathcal{J}
\mathcal{H}''(\mathcal{A})$ coincides with the purely continuous spectrum
of $\mathcal{J} \mathcal{H}''(0)$ and is located on
\begin{equation}
\label{cont-spectrum}
\sigma_c(\mathcal{J} \mathcal{H}''(\mathcal{A})) = \{ i [\Omega, \Omega + 4 d] \} \cup \{ -i [\Omega, \Omega + 4 d] \}.
\end{equation}
The following theorem guarantees the persistence of simple isolated
eigenvalues of the spectral problem (\ref{spectral-NLS}) in the spectral
problem (\ref{Fourier-KG}) near $\lambda = 0$.

\begin{theorem}
\label{theorem-linear}
Under the assumption of Theorem \ref{theorem-existence}, assume that
$\Lambda \in \mathbb{C}$ is a simple isolated eigenvalue of the
spectral problem (\ref{spectral-NLS}) such that $2 \Lambda \notin
\sigma_c(\mathcal{J} \mathcal{H}''(\mathcal{A}))$ and
$(b_+,b_-) \in \ell^2(\ZZ^d) \times \ell^2(\ZZ^d)$.
There exists $\eps_0 > 0$ and $C_0 > 0$ such that the spectral problem
(\ref{Fourier-KG}) for every $\eps \in (0,\eps_0)$ admits a unique
$C^{\omega}$ branch of the eigenvalue--eigenvector pair with
$\lambda(\eps) \in \CC$ and $\{ B^{(m)}(\eps) \}_{m \in \mathbb{N}}
\in \ell^{2,2}(\ZZ;\ell^2(\ZZ^d))$ satisfying
\begin{equation}
\label{error-lambda}
\left| \lambda(\eps) - \eps \Lambda \right| \leq C_0 \eps^2,
\end{equation}
\begin{equation}
\label{error-B}
\| B^{(1)} - b_+ - \im b_-\|_{\ell^2(\ZZ^d)}
+ \| B^{(-1)} - b_+ + \im b_-\|_{\ell^2(\ZZ^d)}  \leq C_0 \eps,
\end{equation}
and
\begin{equation}
\label{error-B-rest}
\| B^{(0)} \|_{\ell^2(\ZZ^d)} + \sum_{m \geq 2} \| B^{(m)} \|_{\ell^2(\ZZ^d)} \leq C_0 \eps,
\end{equation}
for every $\eps \in (0,\eps_0)$.
\end{theorem}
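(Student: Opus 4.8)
The plan is to mirror the Lyapunov–Schmidt argument of Theorem~\ref{theorem-existence}, but now applied to the linear spectral problem \eqref{Fourier-KG} viewed as a nonlinear equation in the pair $(\lambda, \{B^{(m)}\})$. The key structural observation, recorded in the remark preceding the statement, is that it suffices to work near $\lambda = 0$, where the unperturbed operator (at $\eps = 0$, $\omega = 1$) has kernel spanned by the $m = \pm 1$ Fourier modes. So I would split $B = B^{\sharp} + B^{\flat}$ with $B^{\sharp}$ supported on $m = \pm 1$ and $B^{\flat}$ on the complement, and likewise project \eqref{Fourier-KG} onto the corresponding subspaces of $\hat X_0$, obtaining a range equation and a kernel equation. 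Throughout I use the parametrizations $\omega^2 = 1 - \eps\Omega$ from \eqref{param-omega} and write $\lambda = \eps\Lambda$ as the natural scaling suggested by \eqref{lin-dnls}; note that the breather $U = U(\eps)$ and its Fourier coefficients are the ones already constructed in Theorem~\ref{theorem-existence}, so $U^{2p}$ in \eqref{Fourier-KG} is a known, $C^\omega$, $\mathcal{O}(1)$ quantity with $A^{(1)} \to \mathcal{A}$.

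First I would solve the range equation. On the complement of $m = \pm 1$, the operator $B^{(m)} \mapsto [1 + (\lambda + \im m\omega)^2] B^{(m)}$ is diagonal and, for $|\lambda|$ and $|\omega - 1|$ of order $\eps$, bounded below by a constant multiple of $(1+m^2)$ uniformly in $m \neq \pm 1$ — this is the exact analogue of the estimate $|1 - m^2\omega^2| > \tfrac12(1+m^2)$ used in the existence proof, and it requires that $2\Lambda$ stay away from $\sigma_c(\mathcal{J}\mathcal{H}''(\mathcal{A}))$ precisely because that continuous spectrum is where the $m = 0$ block degenerates as $\eps \to 0$. A Neumann-series argument then inverts the full perturbed operator on $\hat W_0$ with an $\eps$-independent bound, and the range equation, rewritten as a fixed point for $B^{\flat}$, has the form $B^{\flat} = \eps\,(\text{bounded linear operator in } B^{\flat}, \text{ plus source term linear in } B^{\sharp})$; by the implicit function theorem this yields a unique $C^{\omega}$ map $(B^{\sharp}, \Lambda, \eps) \mapsto B^{\flat}(B^{\sharp}, \Lambda, \eps)$ with $\|B^{\flat}\|_{\hat X_2} \leq C\eps\,\|B^{\sharp}\|$, hence $B^{(0)}$ and all $B^{(m)}$ with $|m| \geq 2$ are $\mathcal{O}(\eps)$. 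Substituting this into the kernel equation and dividing by $\eps$ gives, at leading order, exactly the dNLS spectral problem \eqref{spectral-NLS} written in terms of $B^{(1)}, B^{(-1)}$ via the identification $B^{(1)} = b_+ + \im b_-$, $B^{(-1)} = b_+ - \im b_-$ — this is the linear-stability counterpart of Remark~\ref{remark-dnls}, and one evaluates the Fourier integral $\int U^{2p} W e^{-\im m\tau}\,d\tau$ at $\eps = 0$ the same way the nonlinear term was evaluated there, keeping only the resonant contractions.

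The main obstacle is the final finite-dimensional step: the reduced kernel equation is a $2|\ZZ^d|$-dimensional (in the lattice variable) \emph{eigenvalue} problem, nonlinear in the parameter $\Lambda$, of the schematic form $\mathcal{M}(\Lambda, \eps)\,\vec f = 0$ where $\mathcal{M}(\Lambda, 0) = \mathcal{J}\mathcal{H}''(\mathcal{A}) - 2\Lambda$ acting on $(b_+, b_-)$. To continue a \emph{simple isolated} eigenvalue $\Lambda$ one cannot apply the implicit function theorem to $\mathcal{M}(\Lambda,\eps)\vec f = 0$ directly (the solution set is a ray), so I would instead augment the system with a normalization $\langle \vec f, \vec f_0 \rangle = 1$ against the unperturbed eigenvector $\vec f_0$ and treat $(\vec f, \Lambda)$ jointly as the unknown; simplicity of $\Lambda$ and the Fredholm property of $\mathcal{J}\mathcal{H}''(\mathcal{A}) - 2\Lambda$ (guaranteed since $2\Lambda \notin \sigma_c$, via Remark~\ref{rem-stability}) make the Jacobian of the augmented map invertible at $\eps = 0$. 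The implicit function theorem then produces a unique $C^\omega$ branch $(\vec f(\eps), \Lambda(\eps))$ with $|\Lambda(\eps) - \Lambda| \leq C\eps$, hence $|\lambda(\eps) - \eps\Lambda| = \eps|\Lambda(\eps) - \Lambda| + \mathcal{O}(\eps^2) \leq C_0\eps^2$ once one tracks that the $\mathcal{O}(\eps)$ correction to $\Lambda$ is itself $\mathcal{O}(1)$; combining with the range-equation bounds on $B^{\flat}$ gives \eqref{error-B} and \eqref{error-B-rest}. A secondary technical point to handle carefully is that, unlike in the existence proof, no reality/evenness symmetry constrains the $B^{(m)}$, so the analysis genuinely runs over all $m \in \ZZ$ rather than $\mathbb{N}_0$; this doubles the bookkeeping but does not change the structure of the argument.
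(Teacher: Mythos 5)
Your proposal is correct and follows essentially the same Lyapunov--Schmidt route as the paper: the same decomposition onto the $m=\pm1$ kernel, the same Neumann-series inversion of the range equation with an $\mathcal{O}(\eps)$ bound on $B^{\flat}$, and the same reduction of the kernel equation to the dNLS spectral problem via $B^{(\pm1)}=b_+\pm \im b_-$; the only deviation is that for the final persistence of the simple eigenvalue the paper cites Kato's analytic perturbation theory for the characteristic roots of the reduced system, whereas you run the equivalent augmented implicit-function-theorem argument with a normalization $\langle \vec f,\vec f_0\rangle=1$, which proves the same fact directly. One small inaccuracy in a side remark: the continuous spectrum $\sigma_c(\mathcal{J}\mathcal{H}''(\mathcal{A}))$ emerges in the limit $\eps\to0$ from the $m=\pm1$ blocks at spatial infinity, where $1+(\eps\Lambda+\im m\omega)^2-\eps\,\sigma(\Delta)$ can vanish for $2\Lambda\in\pm \im[\Omega,\Omega+4d]$, not from a degeneration of the $m=0$ block (which stays uniformly invertible near $1$); the hypothesis $2\Lambda\notin\sigma_c$ is needed only to keep $\Lambda$ isolated so that the kernel-equation perturbation step applies, exactly as you use it in your last paragraph.
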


\begin{proof}
We adopt the same Hilbert spaces as those used in the proof of Theorem
\ref{theorem-existence}.  Any element of $\hat{X}_2$ can be decomposed
into
\begin{equation}
\label{decom-B}
B = B^\sharp + B^\flat\ ,\qquad B^\sharp\in \hat{V}_2\ ,\qquad B^\flat\in
\hat{W}_2.
\end{equation}
We assume that $\omega(\eps)$ and $\{A^{(m)}(\eps)\}_{m \in
  \mathbb{Z}}$ are given by Theorem \ref{theorem-existence} with the
error bounds (\ref{error-omega}) and (\ref{error-A}). Let us introduce
the linear operator $\hat{M}_{\lambda,\omega} : \hat{X}_2 \rightarrow
\hat{X}_0$:
\begin{equation}
\label{M-oper}
\tond{\hat{M}_{\lambda,\omega} B}^{(m)} = \left[ 1+ (\lambda + \im m \omega)^2 \right] B^{(m)}, \quad m \in \ZZ.
\end{equation}
The spectral problem (\ref{Fourier-KG}) for Fourier coefficients can
be written in the abstract form:
\begin{equation}
\label{F-oper}
F(B,\lambda,\eps) := \hat{M}_{\lambda,\omega(\eps)} B + \eps S(A(\eps),B) - \eps \Delta B = 0,
\end{equation}
where $S(A(\eps),B)$ is the linear map on $B$ obtained from the
nonlinear term $N(A)$.  Since $p \in \mathbb{N}$, the map
$F(B,\lambda,\eps) : \hat{X}_2 \times \mathbb{C} \times \mathbb{R}
\rightarrow \hat{X}_0$ is $C^{\omega}$ in its arguments. Projecting
equation (\ref{F-oper}) onto $\hat{V}_0$ and $\hat{W}_0$ yields the
following range and kernel equations:
\begin{equation}
\label{kernel-range-lin}
\Pi_{\hat{V}_0} F(B^\sharp + B^\flat,\lambda,\eps)= 0,\qquad\qquad \Pi_{\hat{W}_0} F(B^\sharp + B^\flat,\lambda,\eps)= 0\ .
\end{equation}

The range equation in system (\ref{kernel-range-lin}) can be solved in
the same way as the range equation in system (\ref{kernel-range}). By
using the implicit function theorem, for every $\eps \in (0,\eps_0)$,
$\Lambda \in \mathbb{C}$, and $B^{\sharp} \in \hat{V}_2 \subset
\hat{X}_2$, there exists a unique solution $B^\flat \in \hat{W}_2
\subset \hat{X}_2$ of the range equation $\Pi_{\hat{V}_0} F(B^\sharp +
B^\flat,\eps \Lambda,\eps)= 0$ such that the mapping
$(B^{\sharp},\Lambda,\epsilon) \rightarrow B^{\flat}$ is $C^{\omega}$
and the solution is as small as $\mathcal{O}(\eps)$ thanks to the
bound
\begin{equation}
\label{e.Aflat.app-lin}
\| B^\flat \|_{\hat{X}_2} \leq C \eps,
\end{equation}
for some $\eps$-independent $C$.

Inserting $\omega = 1 - \frac{1}{2} \epsilon \Omega +
\mathcal{O}(\epsilon^2)$, $\lambda = \epsilon \Lambda$, and the
$C^{\omega}$ mapping $(B^{\sharp},\Lambda,\epsilon) \rightarrow
B^{\flat}$ into the kernel equation in system (\ref{kernel-range-lin})
and dividing by $\eps$, we obtain the following system of two
equations on the two amplitudes $(B^{(1)},B^{(-1)})$:
\begin{eqnarray}
\nonumber
(\Omega \pm 2 i \Lambda) B^{(\pm 1)}
- \Delta B^{(\pm 1)} + \gamma_p \mathcal{A}^{2p} \left[ (p+1) B^{(\pm 1)} + p B^{(-1)} \right] \\
+ \eps R^{(\pm 1)}(B^{(1)},B^{(-1)},\Lambda,\eps) = 0,
\label{ampl-B}
\end{eqnarray}
where $R^{(\pm 1)}(B^{(1)},B^{(-1)},\Lambda,\eps) : \ell^2(\ZZ^d)
\times \ell^2(\ZZ^d) \times \mathbb{R} \times \mathbb{R} \to
\ell^2(\ZZ^d)$ is a linear map on $(B^{(1)},B^{(-1)})$ with
$C^{\omega}$ coefficients which are bounded as $\eps \to 0$ thanks to
the bound (\ref{e.Aflat.app-lin}).  In the derivation of numerical
coefficients in (\ref{ampl-B}), we have used the following explicit
computation:
\begin{eqnarray*}
& \phantom{t} & \frac1{2\pi}\int_{-\pi}^\pi
    \left[ \mathcal{A} e^{\im \tau} +
\mathcal{A} e^{-\im \tau} \right]^{2p} \left[ B^{(1)} e^{\im \tau} + B^{(-1)} e^{-\im \tau} \right] e^{\mp \im \tau}d\tau  \\
& = & \sum_{k=0}^{2p} \binom{2p}{k} \mathcal{A}^{2p} \frac1{2\pi} \int_{-\pi}^\pi
\left[ B^{(1)} e^{\im (2k-2p+1 \mp 1)\tau} + B^{(-1)} e^{\im (2k-2p-1 \mp 1) \tau}\right] d\tau\\
& = & \frac{p+1}{2p+1} \gamma_p \mathcal{A}^{2p} B^{(\pm 1)} + \frac{p}{2p+1} \gamma_p \mathcal{A}^{2p} B^{(-1)}.
\end{eqnarray*}
At $\eps = 0$, the system (\ref{ampl-B}) becomes the spectral problem
(\ref{spectral-NLS}) in variables $B^{(\pm 1)} = b_+ \pm \im
b_-$. It is assume that $\Lambda$ is a simple isolated eigenvalue
in the spectral problem (\ref{spectral-NLS}) with $2 \Lambda \notin
\sigma_c(\mathcal{J} \mathcal{H}''(\mathcal{A}))$ and a related
eigenvector $(b_+,b_-) \in \ell^2(\ZZ^d) \times
\ell^2(\ZZ^d)$.  For $\eps \neq 0$, the eigenvalue $\Lambda$ becomes
the characteristic root of the linear system (\ref{ampl-B}). By the
analytic perturbation theory for closed linear operators (see Theorem
1.7 in Chapter VII on p. 368 in \cite{Kato}), simple characteristic
roots and the associated eigenvectors are continued in $\eps$ as
$C^{\omega}$ functions. This completes justification of the bounds
(\ref{error-lambda}) and (\ref{error-B}).
\end{proof}

\begin{remark}
\label{rem-Krein}
If $2 \Lambda \in i \mathbb{R} \backslash \sigma_c(\mathcal{J}
\mathcal{H}''(\mathcal{A}))$, the bound (\ref{error-lambda}) is not
sufficient to guarantee that the eigenvalue $\lambda$ remains on $\im
\mathbb{R}$.
\end{remark}

In order to obtain a definite prediction that the simple isolated
eigenvalue $\Lambda \in \im \mathbb{R}$ of the spectral problem
(\ref{spectral-NLS}) persist as a simple isolated eigenvalue $\lambda
\in \im \mathbb{R}$ of the spectral problem (\ref{Fourier-KG}), we use
the Krein signature theory for linearized Hamiltonian
systems. Consider the linearized dKG equation (\ref{lin-KG}) and define
\begin{equation}
\label{Krein}
k(w) := \im \sum_{n \in \ZZ^d} w_n \dot{\bar{w}}_n - \bar{w}_n \dot{w}_n.
\end{equation}
It is straightforward to verify that $k(w)$ is independent of $t$.
Let us represent the eigenvalue-eigenvector pair by
$w(t) = W(\tau) e^{\lambda t}$ with $\lambda \in \mathbb{C}$
and $W \in H^2_{\rm per}([-\pi,\pi];\ell^2(\ZZ^d))$. Then,
$k(w) = K(W,\lambda) e^{(\lambda + \bar{\lambda}) t}$ with
\begin{equation}
\label{Krein-eigenvalue}
K(W,\lambda) := \im \omega \sum_{n \in \ZZ^d} \left( W_n \bar{W}'_n - \bar{W}_n W'_n \right)
- \im (\lambda - \bar{\lambda}) \sum_{n \in \ZZ^d} |W_n|^2.
\end{equation}
The following lemma reproduces the main result of the Krein theory.

\begin{lemma}
\label{lem-Krein}
Let $\lambda \in \mathbb{C}$ be a simple isolated eigenvalue
in the spectral problem (\ref{spectral-KG}) with the
eigenvector $W \in H^2_{\rm per}([-\pi,\pi];\ell^2(\ZZ^d))$.
Then, $K(W,\lambda) = 0$ if ${\rm Re}(\lambda) \neq 0$ and
$K(W,\lambda) \neq 0$ if $\lambda \in \im \mathbb{R} \backslash
\{0\}$.
\end{lemma}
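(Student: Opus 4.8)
The plan is to exploit the conservation of the quantity $k(w)$ in \eqref{Krein} together with the Floquet ansatz $w(t) = W(\tau)e^{\lambda t}$, which gives $k(w) = K(W,\lambda)e^{(\lambda+\bar\lambda)t}$ as already recorded in \eqref{Krein-eigenvalue}. First I would observe that since $k(w)$ is independent of $t$, the only way $K(W,\lambda)e^{(\lambda+\bar\lambda)t}$ can be constant in $t$ is if either $\lambda + \bar\lambda = 0$, i.e.\ ${\rm Re}(\lambda) = 0$, or $K(W,\lambda) = 0$. This immediately yields the first assertion: if ${\rm Re}(\lambda) \neq 0$, then $K(W,\lambda) = 0$. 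So the substantive content is the converse statement for purely imaginary eigenvalues $\lambda \in \im\mathbb{R}\setminus\{0\}$.

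For that part, I would argue by contradiction: suppose $\lambda = \im\Omega_0 \in \im\mathbb{R}\setminus\{0\}$ is a simple isolated eigenvalue of \eqref{spectral-KG} with eigenfunction $W$, and suppose $K(W,\lambda) = 0$. Because $\lambda \in \im\mathbb{R}$, the coefficients of the spectral problem \eqref{spectral-KG} enjoy the Hamiltonian/reversibility symmetry: if $(\lambda, W(\tau))$ solves \eqref{spectral-KG}, then $(\bar\lambda, \overline{W(\tau)}) = (-\lambda, \overline{W(\tau)})$ and also $(\lambda, \overline{W(-\tau)})$ solve related spectral problems, so by simplicity $\overline{W(-\tau)}$ must be a scalar multiple of $W(\tau)$. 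The vanishing of the Krein form $K(W,\lambda)$ (which for $\lambda \in \im\mathbb{R}$ reduces to the symplectic pairing $\im\omega\sum_n(W_n\bar W_n' - \bar W_n W_n')$) signals that the eigenvector is \emph{symplectically orthogonal to itself}; combined with simplicity and the self-adjointness structure of the underlying linear Hamiltonian operator pencil, this forces the existence of a generalized eigenvector, i.e.\ a nontrivial Jordan block at $\lambda$. That contradicts the assumed simplicity of $\lambda$ as an isolated eigenvalue. I would make this precise by recasting \eqref{spectral-KG} as a first-order system $\mathcal{J}\mathcal{L}(\tau)\vec{W} = \lambda\vec{W}$ with $\mathcal{J}$ skew-adjoint and $\mathcal{L}(\tau)$ self-adjoint (the natural Hamiltonian form of the linearized dKG flow), in which $K$ becomes the indefinite inner product $\langle \mathcal{J}^{-1}\vec{W},\vec{W}\rangle$; the standard Krein lemma for such pencils (see, e.g., \cite{KapProm2013,Pelin2011}) then gives exactly the dichotomy claimed.

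The main obstacle I anticipate is the bookkeeping in setting up the correct Hamiltonian first-order formulation of the \emph{periodic-coefficient} spectral problem \eqref{spectral-KG}, so that $K(W,\lambda)$ in \eqref{Krein-eigenvalue} is genuinely identified with the canonical symplectic pairing and the abstract Krein dichotomy applies verbatim; the damping-like term $2\lambda\omega W'$ in \eqref{spectral-KG} must be absorbed correctly, and one must be careful that the relevant operators act on $H^2_{\rm per}([-\pi,\pi];\ell^2(\ZZ^d))$ with $\mathcal{L}(\tau)$ having the Floquet structure. Once the formulation is in place, the proof is the textbook Krein argument: $\mathrm{Re}(\lambda)\neq 0$ forces isotropy of the eigenspace under the indefinite form, while a purely imaginary \emph{simple} eigenvalue must carry a definite (hence nonzero) Krein signature, else a Jordan block appears.
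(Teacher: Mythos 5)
Your proposal is correct and follows essentially the same route as the paper: recast \eqref{spectral-KG} as a first-order Hamiltonian system $J H''(U)\vec{f} = \lambda\vec{f}$ (the paper takes $\vec{f} = (W,\, \lambda W + \omega W')$ with $J$ skew and unitary and $H''(U)$ self-adjoint), identify $K(W,\lambda)$ with the symplectic pairing $\langle J^{-1}\vec{f},\vec{f}\rangle$ up to a factor of $\im$, and invoke the Fredholm alternative to show that $K(W,\lambda)=0$ at a purely imaginary simple eigenvalue would yield a generalized eigenvector, contradicting simplicity. Your derivation of the first assertion from the time-independence of $k(w)$ is a slightly more elementary shortcut than the paper's identity $\lambda K(W,\lambda) = -\bar{\lambda}K(W,\lambda)$, but both follow immediately from the same Hamiltonian structure.
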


\begin{proof}
\simone{Let $Q := \lambda W + \omega W'$, where $W \in H^2_{\rm per}([-\pi,\pi];\ell^2(\ZZ^d))$
is an eigenvector of the spectral problem (\ref{spectral-KG}). Then, system (\ref{spectral-KG})
can be formulated in the Hamiltonian form $J H''(U) \vec{f} = \lambda \vec{f}$, where $\vec{f}
= (W,Q)$, $J^* = -J = J^{-1}$, and $H''(U)$ is self-adjoint in
$L^2_{\rm per}([-\pi,\pi];\ell^2(\ZZ^d))$. By using (\ref{Krein-eigenvalue}), we obtain that}
$$
\lambda K(W,\lambda) = i \langle H''(U) \vec{f}, \vec{f} \rangle = i \langle \vec{f}, H''(U) \vec{f} \rangle
= - \bar{\lambda} K(W,\lambda),
$$
so that if ${\rm Re}(\lambda) \neq 0$ then $K(W,\lambda) = 0$. If
$\lambda \in \im \mathbb{R} \backslash \{0\}$ is a simple isolated
eigenvalue, then we claim that $K(W,\lambda) \neq 0$.  Indeed, if we
assume $K(W,\lambda) = 0$, then there exists a generalized eigenvector
from solution of the nonhomogeneous equation
$$
J H''(U) \vec{g} = \lambda \vec{g} + \vec{f},
$$
since the condition of the Fredholm alternative theorem is satisfied:
$$
\langle J^{-1} \vec{f}, \vec{f} \rangle = \lambda^{-1} \langle H''(U) \vec{f}, \vec{f} \rangle
= -i K(W,\lambda) = 0.
$$
Therefore, $\lambda$ is at least a double eigenvalue in contradiction
with the assumption that $\lambda$ is simple. Therefore, $K(W,\lambda)
\neq 0$.
\end{proof}

Equipped with Lemma \ref{lem-Krein}, we can now prove an analogue of
Theorem \ref{theorem-linear} about persistence of simple isolated
eigenvalues on $\im \mathbb{R}$.

\begin{theorem}
\label{theorem-Krein}
Under the assumption of Theorem \ref{theorem-existence}, assume that
$\Lambda \in \im \mathbb{R} \backslash \{0\}$ is a simple isolated
eigenvalue of the spectral problem (\ref{spectral-NLS})
\simone{such that $2 \Lambda \notin
\sigma_c(\mathcal{J} \mathcal{H}''(\mathcal{A}))$ and }
$(b_+,b_-) \in \ell^2(\ZZ^d) \times \ell^2(\ZZ^d)$.  There
exists $\eps_0 > 0$ and $C_0 > 0$ such that the spectral problem
(\ref{Fourier-KG}) for every $\eps \in (0,\eps_0)$ admits a unique
$C^{\omega}$ branch of the eigenvalue--eigenvector pair with
$\lambda(\eps) \in \im \RR$ and $\{ B^{(m)}(\eps) \}_{m \in
  \mathbb{N}} \in \ell^{2,2}(\ZZ;\ell^2(\ZZ^d))$ satisfying
(\ref{error-lambda}), (\ref{error-B}), and (\ref{error-B-rest}).
\end{theorem}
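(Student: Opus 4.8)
The plan is to combine Theorem~\ref{theorem-linear} with the Krein-signature dichotomy of Lemma~\ref{lem-Krein}. Theorem~\ref{theorem-linear} already produces, for every $\eps\in(0,\eps_0)$, a unique $C^{\omega}$ branch $\bigl(\lambda(\eps),\{B^{(m)}(\eps)\}_{m\in\ZZ}\bigr)$ of the spectral problem (\ref{Fourier-KG}) with $\lambda(\eps)=\eps\Lambda+\Oscr(\eps^2)$ and the eigenvector bounds (\ref{error-B}), (\ref{error-B-rest}). The Lyapunov--Schmidt reduction and the analytic perturbation theory (Theorem~1.7, Ch.~VII, in \cite{Kato}) used in its proof continue the \emph{simple} dNLS eigenvalue $\Lambda$ into a \emph{simple} eigenvalue $\lambda(\eps)$ of (\ref{spectral-KG}) with eigenvector $W(\eps)\in H^2_{\rm per}([-\pi,\pi];\ell^2(\ZZ^d))$; it is \emph{isolated} for $\eps$ small because of the gap hypothesis $2\Lambda\notin\sigma_c(\mathcal{J}\mathcal{H}''(\mathcal{A}))$, and $\lambda(\eps)\neq0$ because $\Lambda\neq0$. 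Hence the only additional statement to prove is ${\rm Re}\,\lambda(\eps)=0$; the quantitative bounds (\ref{error-lambda}), (\ref{error-B}), (\ref{error-B-rest}) are then inherited verbatim from Theorem~\ref{theorem-linear}.

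By Lemma~\ref{lem-Krein}, a simple isolated eigenvalue with ${\rm Re}\,\lambda\neq0$ has vanishing Krein quantity $K(W,\lambda)$. It therefore suffices to show that $K(W(\eps),\lambda(\eps))\neq0$ for all sufficiently small $\eps$: this forces ${\rm Re}\,\lambda(\eps)=0$, i.e.\ $\lambda(\eps)\in\im\RR$, whence $\lambda(\eps)\in\im\RR\backslash\{0\}$.

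To evaluate $K(W(\eps),\lambda(\eps))$ I would insert the Fourier series $W(\tau)=\sum_m B^{(m)}e^{\im m\tau}$ into (\ref{Krein-eigenvalue}), understood (as in the proof of Lemma~\ref{lem-Krein}) as the $L^2_{\rm per}([-\pi,\pi];\ell^2(\ZZ^d))$-averaged bilinear form; orthogonality of $\{e^{\im m\tau}\}_{m\in\ZZ}$ kills the cross terms and leaves the diagonal expression
\begin{displaymath}
K(W(\eps),\lambda(\eps))=2\omega(\eps)\sum_{m\in\ZZ}m\,\|B^{(m)}(\eps)\|^2-\im\bigl(\lambda(\eps)-\overline{\lambda(\eps)}\bigr)\sum_{m\in\ZZ}\|B^{(m)}(\eps)\|^2 ,
\end{displaymath}
where $\|\cdot\|$ denotes the $\ell^2(\ZZ^d)$ norm. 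Using (\ref{error-B}) together with the $\hat{X}_2$-smallness of $B^\flat$ (cf.\ (\ref{e.Aflat.app-lin})), which absorbs the factor $m$, the modes $m=0$ and $|m|\ge2$ contribute $\Oscr(\eps^2)$, while $\lambda(\eps)-\overline{\lambda(\eps)}=\Oscr(\eps)$ and $\omega(\eps)=1+\Oscr(\eps)$; hence the $m=\pm1$ terms dominate and
\begin{displaymath}
K(W(\eps),\lambda(\eps))=2\bigl(\|b_++\im b_-\|^2-\|b_+-\im b_-\|^2\bigr)+\Oscr(\eps)=8\,{\rm Re}\langle b_+,\im b_-\rangle+\Oscr(\eps).
\end{displaymath}
The leading coefficient equals, up to a nonzero real factor, the Krein quantity of $\Lambda$ for the dNLS spectral problem (\ref{spectral-NLS}): from $\mathcal{J}\mathcal{H}''(\mathcal{A})\vec{f}=2\Lambda\vec{f}$ with $\vec{f}=(b_+,b_-)$ one computes $\langle\mathcal{H}''(\mathcal{A})\vec{f},\vec{f}\rangle=2\Lambda\langle\mathcal{J}^{-1}\vec{f},\vec{f}\rangle=-4\im\Lambda\,{\rm Re}\langle b_+,\im b_-\rangle$. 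Since $\Lambda\in\im\RR\backslash\{0\}$ is simple, isolated, and $2\Lambda\notin\sigma_c(\mathcal{J}\mathcal{H}''(\mathcal{A}))$, the Krein theory for linear Hamiltonian systems \cite{KapProm2013,Pelin2011} --- which is the dNLS analogue of Lemma~\ref{lem-Krein}: if $\langle\mathcal{H}''(\mathcal{A})\vec{f},\vec{f}\rangle=0$ the Fredholm alternative produces a generalized eigenvector, hence a Jordan block at $2\Lambda$, contradicting simplicity --- gives $\langle\mathcal{H}''(\mathcal{A})\vec{f},\vec{f}\rangle\neq0$, so ${\rm Re}\langle b_+,\im b_-\rangle\neq0$. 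Therefore $K(W(\eps),\lambda(\eps))$ is bounded away from $0$ for $\eps$ small, which completes the argument (after shrinking $\eps_0$ if necessary).

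The hard part will be the third step: bounding the remainder $\Oscr(\eps)$ in the expansion of $K$ uniformly in $\eps$, i.e.\ checking that the subdominant Fourier modes --- each $\Oscr(\eps)$ in $\ell^2(\ZZ^d)$, hence $\Oscr(\eps^2)$ in the quadratic form, the $\hat{X}_2$-weight being needed to tame the extra factor $m$ in the first sum --- together with $\lambda(\eps)-\overline{\lambda(\eps)}=\Oscr(\eps)$ genuinely enter only at order $\eps$, so that the $m=\pm1$ contribution controls $K$ from below independently of $\eps$. A smaller point to settle is the identification of $K$ in (\ref{Krein-eigenvalue}) with the $L^2_{\rm per}$-averaged bilinear form $\im\langle H''(U)\vec{f},\vec{f}\rangle$ appearing in the proof of Lemma~\ref{lem-Krein}, which is what legitimizes the orthogonality reduction above.
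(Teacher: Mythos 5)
Your proposal is correct and follows essentially the same route as the paper: invoke Theorem~\ref{theorem-linear} for the branch and the bounds, compute $\lim_{\eps\to0}K(W,\lambda)=2\|B^{(1)}\|^2-2\|B^{(-1)}\|^2$, identify this with the (nonzero) Krein quantity of the simple isolated dNLS eigenvalue $\Lambda$, and conclude ${\rm Re}\,\lambda(\eps)=0$ by continuity together with Lemma~\ref{lem-Krein}. The extra care you flag about the subdominant Fourier modes and the $\tau$-averaging is exactly what the paper compresses into ``by smoothness of the branch we can compute the limit,'' so there is no substantive difference.
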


\begin{proof}
By Remark \ref{rem-Krein}, we only need to prove that $\lambda(\eps) =
\eps \Lambda + \mathcal{O}(\eps^2)$ remains on $\im \RR$. By
smoothness of the branch of eigenvalue-eigenvectors in $\eps$, we can
compute the limit $\eps \to 0$ for the Krein quantity $K(W,\lambda)$
in (\ref{Krein-eigenvalue}).  We obtain
\begin{equation*}
  \lim_{\eps \to 0} K(W,\lambda)  =
  2 \| B^{(1)} \|^2_{\ell^2(\ZZ^d)} - 2 \| B^{(-1)} \|^2_{\ell^2(\ZZ^d)}
  =  4 \im \langle b_-, b_+ \rangle_{\ell^2(\ZZ^d)} -
  4 \im \langle b_+, b_- \rangle_{\ell^2(\ZZ^d)},
\end{equation*}
which is the Krein quantity for the spectral problem
(\ref{spectral-NLS}). Since $\Lambda \in \im \mathbb{R} \backslash
\{0\}$ is simple and isolated, the Krein quantity for the spectral
problem (\ref{spectral-NLS}) enjoys the same properties as in Lemma
\ref{lem-Krein}. In particular, it is real and nonzero. By continuity
in $\eps$, $K(W,\lambda)$ is nonzero for every $\eps \in (0,\eps_0)$,
so that by Lemma \ref{lem-Krein}, the eigenvalue $\lambda(\eps) = \eps
\Lambda + \mathcal{O}(\eps^2)$ of the spectral problem
(\ref{spectral-KG}) satisfies ${\rm Re}(\lambda) = 0$.
\end{proof}

\begin{remark}
\simone{Assume that $\sigma_d(\mathcal{J} \mathcal{H}''(\mathcal{A}))$ contains
no eigenvalues $2 \Lambda$ with ${\rm Re}(\Lambda) \neq 0$, hence
the dNLS solitons are spectrally stable.  Theorem \ref{theorem-Krein}
implies } that the spectral stability of dNLS solitons
is transferred to the spectral stability of dKG breathers
if bifurcations of new isolated eigenvalues from the continuous spectrum
in (\ref{cont-spectrum}) do not result in the appearance of
new eigenvalues with ${\rm Re}(\lambda) \neq 0$ in the spectral
problem (\ref{spectral-NLS}). Such arguments generally follow
from the Krein theory \cite{Pelin2011}. In the anti-continuum limit
of the dNLS equation (\ref{dnls-introduction}),
one can find precise conditions excluding bifurcations of new isolated eigenvalues
from the continuous spectrum of the spectral problem (\ref{spectral-NLS}) \cite{PelS11}.
\end{remark}

\section{Long-time nonlinear stability via resonant normal forms}
\label{s:RNF}



The resonant normal form we consider here is based on the scheme
already illustrated in \cite{BamG93,BGG89}, which is suitable for
infinite dimensional Hamiltonian systems and can be implemented by
working at the level of either the Hamiltonian fields (as we decide to
do, following \cite{BamG93}) or the Hamiltonian function (as in
\cite{BGG89}).

In what follows we first present a result according to which the
Hamiltonian of our problem can be put into a resonant normal form up
to an exponentially small remainder. The truncated normal form
represents a generalized dNLS equation in the same spirit as
in~\cite{PalP16a}. We then give a theorem about the existence of a
breather for the dKG equation, exponentially close to discrete soliton
of the normal form; we stress here that such an estimate is a
significant improvement with respect to the one obtained
in~\cite{PalP16} where the two objects were proven to be only order
one close in the small parameter. As a last step, under additional
hypothesis that the dNLS soliton is a minimizer in the variational
problem (\ref{constrained-var-pr}), we state a stability result for
the discrete breathers on an exponentially long time scale. The proofs
of the above mentioned results are illustrated respectively in
Subsections~\ref{ss:p1},~\ref{ss:p2} and~\ref{ss:p3}.  \simone{Stability
  over longer, even infinite, time scales cannot be excluded a priori,
  but may require different ingredients than the present normal form
  construction. Indeed, although a standard control of the
  remainder does not generally prevent the orbit to leave a small neighbourhood
  of the breather beyond the time scale here considered, the stability could 
  in principle persist even if we are not able to ensure it.}


\subsection{Setting, preliminaries and normal form result}
\label{ss:setting}

We consider the Hamiltonian corresponding to the scaled model
\eqref{e.KG.scaled}
\begin{equation}
  \label{e.Ham.scaled}
  H = \frac12 \sum_{j\in\ZZ^d} \tond{u_j^2+v_j^2} + \frac\eps{2p+2} \sum_{j\in\ZZ^d} u_j^{2p+2}+
  \frac\eps2 \sum_{j\in\ZZ^d} \sum_{|j-h|=1}\tond{u_j-u_h}^2,
\end{equation}
where $v_j = \dot{u}_j$. The Hamiltonian \eqref{e.Ham.scaled} can be
obtained scaling both the variables $(u_n,\dot u_n)$ according to
\eqref{e.scale.orig}, and the original Hamiltonian original energy
\eqref{Ham} by $\eps^{-\frac1p}$. In the following,
\eqref{e.Ham.scaled} will be considered as a nearly integrable
Hamiltonian system
\begin{equation}
  \label{e.NIHS}
  H = G+F\ ,
  \qquad\quad
  G := \frac12 \sum_{j\in\ZZ^d}\tond{u_j^2+v_j^2}\ ,
  \qquad
  F := H-G = \Oscr(\eps)\ ,
\end{equation}
where $G$ is an integrable Hamiltonian and $F$ is a perturbation of
order $\Oscr(\eps)$.

We need some notations (we refer to Section 5 of \cite{BamG93} for
further details). We consider $z:=(u,v)$ in the complexified phase
space $\Pscr=\ell^2(\CC)\times\ell^2(\CC)$ with the usual $\ell^2$
norm, which makes it Hilbert with the usual inner product. Given
$0<R<1$ and $0<\dfr\leq\frac14$, we restrict to a ball around the
origin $B_{R,\dfr}:=\graff{z\in\Pscr\quad s.t.\quad \norm{z}<
  R(1-\dfr)}$. To deal with complex valued functions $g$ and
Hamiltonian vector fields $X_g$ on such a generic ball, we make use of
the supremum norm
\begin{equation}
  \label{e.norms}
N_\dfr(g):=\sup_{z\in B_{R,\dfr}}|g(z)| \ ,\qquad N^{\nabla}_\dfr(g):=\frac1R
\sup_{z\in B_{R,\dfr}}\norm{X_g(z)}\ .
\end{equation}
Our aim is to construct a normal form $K$ admitting a second conserved
quantity $G$
\begin{displaymath}
H=K+\Pscr\ ,\qquad\qquad \Poi{K}{G}=0\ ;
\end{displaymath}
this additional conserved quantity, which correspond to the $\ell^2$
norm, corresponds to the invariance under the rotation symmetry, given
by the periodic flow $\Phi^t_G$ of the Hamiltonian field $X_G$. The
normal form $K$ is thus a generalized dNLS model (see also
\cite{PalP16a}); given the smallness of $\Pscr$, $G$ turns out to be
an approximated conserved quantity for $H$, whose variation can be
kept bounded on exponentially long times.
\begin{theorem}
  \label{t.main}
  For any positive $\dfr\leq 1/4$, any dimension $d\geq 1$ and any $R
  < 1$, there exists $\eps^*(\dfr,d,R)$ such that, for $\eps<\eps^*$
  there exists a canonical change of coordinates $T_\Chi$ mapping
  \begin{equation}
    \label{e.incl.balls}
    B_{R,2\dfr} \subset\  T_\Chi\tond{B_{R,\dfr}}  \subset B_{R,0}
    \qquad\qquad\qquad
    B_{R,3\dfr} \subset\  T_\Chi\tond{B_{R,2\dfr}} \subset B_{R,\dfr}
  \end{equation}
  which puts the Hamiltonian \eqref{e.Ham.scaled} into the resonant
  normal form
  \begin{equation}
    \label{e.main.est.1}
    H=G+Z+\Pscr\ ,
    \qquad\qquad
    \Poi{G}{Z}=0\ ,
    \qquad\qquad
    \nvec{\Pscr}{\dfr}\leq \mu\exp\tond{-\frac1{\mu}}\ ,
  \end{equation}
  where $\mu:=\frac{12e\pi\eps}{\dfr}=\Oscr(\eps)$.  Moreover, for any
  initial datum $z_0\in B_{R,3\dfr}$, there exists a positive constant
  $C$ such that the variations of $G$ and $Z$ are bounded as follows
  \begin{equation}
    \label{e.main.est.2}
    \begin{aligned}
      |G(z(t))-G(z_0)| &< C\mu\nfun{G}{0}\ ,\\
      |Z(z(t))-Z(z_0)| &< C\mu\nfun{F}{0}\ ,
    \end{aligned}
    \qquad\qquad
    |t|\leq T^*:=\exp\tond{\frac1{\mu}}\ .
  \end{equation}
\end{theorem}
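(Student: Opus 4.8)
The statement is a standard resonant-normal-form result in the style of Bambusi--Giorgilli, so the plan is to follow the iterative Lie-transform scheme adapted to the specific resonance module generated by the rotation symmetry $G$. I would first decompose the perturbation $F = H - G$ into its Fourier modes with respect to the periodic flow $\Phi^t_G$: since $G$ generates a $2\pi$-periodic flow, every analytic function on $B_{R,\dfr}$ splits as $F = \sum_{k\in\ZZ} F_k$ with $\Poi{G}{F_k} = \im k F_k$, and the resonant part is $Z^{(1)} := F_0 = \langle F\rangle$, the average over the flow. The non-resonant part $F - F_0$ is then removed to higher order by solving the homological equation $\Poi{G}{\Chi_1} = F - F_0$, which is diagonal in the Fourier decomposition and hence explicitly solvable with the quantitative bound $\nvec{\Chi_1}{\dfr'}\lesssim \nvec{F}{\dfr}$ at the cost of a small loss $\dfr\to\dfr'$ of analyticity domain.

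**Iteration and optimization.** Next I would iterate: at step $r$ one writes $H^{(r)} = G + Z^{(r)} + F^{(r)}$ with $\Poi{G}{Z^{(r)}}=0$ and $F^{(r)} = \Oscr(\eps^{r})$ in the $\nvec{\cdot}{\cdot}$ norm, apply the time-one flow $T_{\Chi_{r+1}}$ of $\Chi_{r+1}$ solving $\Poi{G}{\Chi_{r+1}} = -\Pi_{\neq 0}F^{(r)}$, and collect the new resonant contribution into $Z^{(r+1)}$ and the new remainder into $F^{(r+1)}$. The key quantitative ingredient is the standard estimate on the Lie derivative: if $\norm{X_\Chi}$ is suitably small on $B_{R,\dfr}$, then $T_\Chi$ maps nested balls into nested balls as in \eqref{e.incl.balls}, and $\nvec{T_\Chi g - g}{2\dfr}\leq C\nvec{\Chi}{\dfr}\nvec{g}{\dfr}$, with the analyticity loss at each step of size $\dfr/r$ (a geometric-type apportioning) so that the total loss over all steps is controlled by a fixed fraction of $\dfr$. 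Tracking the constants one gets $\nvec{F^{(r)}}{\dfr}\leq \nvec{F}{0}\,r!\,(C\eps/\dfr)^r$ roughly, and then one optimizes the number of steps $r_{\mathrm{opt}}\sim 1/\mu$ with $\mu = 12e\pi\eps/\dfr$, which produces the exponentially small bound $\nvec{\Pscr}{\dfr}\leq \mu\exp(-1/\mu)$ in \eqref{e.main.est.1}. This is where the explicit constant $12e\pi$ in $\mu$ comes from; I would quote the combinatorial bookkeeping of Section 5 of \cite{BamG93} rather than redo it.

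**From the normal form to the conserved-quantity bounds.** Once \eqref{e.main.est.1} is established, \eqref{e.main.est.2} follows by a trapping/a-priori-estimate argument. Since $\Poi{G}{Z}=0$, along the flow of $H = G + Z + \Pscr$ we have $\dot G = \Poi{G}{\Pscr}$ and $\dot Z = \Poi{Z}{\Pscr}$; using $|\Poi{g}{h}|\leq C R\,\nvec{g}{\cdot}\nvec{h}{\cdot}$ together with $\nvec{G}{0}$ and $\nvec{F}{0}$ as natural scales (note $\nvec{Z}{\dfr}\lesssim \nvec{F}{0}$ since $Z$ is built from averages of $F$ and its corrections), one integrates to get $|G(z(t)) - G(z_0)|\lesssim |t|\,\mu\exp(-1/\mu)\,\nfun{G}{0}$, and similarly for $Z$. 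A bootstrap argument, keeping $z(t)$ inside $B_{R,2\dfr}$ as long as the right-hand side stays small, shows the estimate persists up to $|t|\leq T^* = \exp(1/\mu)$, since $T^*\cdot\mu\exp(-1/\mu) = \mu = \Oscr(\eps)$; the inclusions \eqref{e.incl.balls} guarantee that starting from $z_0\in B_{R,3\dfr}$ the orbit does not escape the domain where all estimates are valid during that time.

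**Main obstacle.** The conceptually routine part is the existence of the normal form; the delicate part is the \emph{quantitative} control of the analyticity-loss sequence $\{\dfr_r\}$ and of the accumulating constants so that (i) the composition $T_\Chi = \lim_r T_{\Chi_1}\circ\cdots\circ T_{\Chi_r}$ converges and still satisfies the ball inclusions \eqref{e.incl.balls} with only the factors $2$ and $3$ appearing (rather than an uncontrolled geometric factor), and (ii) the optimization over $r$ yields precisely $\mu = 12e\pi\eps/\dfr$ and not merely "some $\Oscr(\eps)$". This is exactly the technical core of the Bambusi--Giorgilli machinery, and the proof will essentially consist of verifying that the hypotheses of that machinery are met by the Hamiltonian \eqref{e.Ham.scaled}: namely that $F = \Oscr(\eps)$ in the $\nvec{\cdot}{0}$ norm on $B_{R,0}$ (immediate from \eqref{e.NIHS} and the polynomial form of the nonlinearity and the bounded discrete Laplacian), and that $G$ generates a periodic flow with period $2\pi$ so that the homological equation has the bounded inverse $\Pi_{\neq 0}/\!\Poi{G}{\cdot}$ with loss $\dfr_r - \dfr_{r+1}$. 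I would therefore structure Subsection~\ref{ss:p1} as: (a) Fourier decomposition along $\Phi^t_G$ and solvability of the homological equation with explicit loss; (b) one step of the iteration with all constants; (c) the iterative lemma and its convergence; (d) optimization and the trapping argument for \eqref{e.main.est.2}.
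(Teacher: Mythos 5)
Your proposal follows essentially the same route as the paper's own (sketched) proof: the homological equation is solved by averaging along the $2\pi$-periodic flow of $G$, the iteration apportions an analyticity loss of $\dfr/r$ per step, the remainder is made exponentially small by optimizing the number of steps $r\sim\dfr/\eps$, and the bounds \eqref{e.main.est.2} follow by integrating $\Poi{G}{\Pscr}$ and $\Poi{Z}{\Pscr}$ along the flow up to times $\exp(1/\mu)$ while the ball inclusions keep the orbit in the domain. The only inessential difference is formal: the paper uses the single Lie-transform operator $T_\Chi$ generated by the whole sequence $\graff{\Chi_s}$ via the recursions \eqref{e.F.rec} rather than a composition of time-one flows, and both you and the authors defer the detailed combinatorial bookkeeping to \cite{BamG93,BGG89}.
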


The construction is based on the linear operator $T_\Chi$ associated
to a generating sequence $\graff{\Chi_s}_{s=1}^r$, where
$\Chi_s=\Oscr(\eps^s)$, which acts recursively on $G$ and $F$ as follows
\begin{equation}
  \label{e.F.rec}
    \begin{aligned}
      T_\Chi G &= \sum_{r\geq 0} G_r\ ,
      &\qquad G_0&:=G\ ,
      &\quad G_r&:= \sum_{l=1}^r\frac{l}{r}\Poi{\Chi_l}{G_{r-l}}\ ,
      \\
      T_\Chi F &= \sum_{r\geq 0} F_r\ ,
      &\qquad F_0&:=0\ ,\quad F_1:=F\ ,
      &\quad F_r&:=\sum_{l=1}^{r-1}\frac{l}{r-1}\Poi{\Chi_l}{F_{r-l}}\ .
    \end{aligned}
\end{equation}
In the above recursive definition, it coherently turns out that
$F_r=\Oscr(\eps^r)$. Such a linear operator also provides the
close-to-the-identity nonlinear transformation
\begin{equation}
  \label{e.change.coord}
  T_\Chi z = z + \sum_{r\geq 1}z_r\ ,
  \qquad\qquad
  z_r = \sum_{l=1}^r\frac{l}{r}\Poi{\Chi_l}z_{r-l}\ .
\end{equation}
The generating sequence $\Chi$, and the corresponding transformation
$T_\Chi$, will be determined in order to put the Hamiltonian in
resonant normal form up to order $\Oscr(\eps^r)$
\begin{equation}
  \label{e.NF}
  H^{(r)} = T_\Chi H = G + Z + \Rscr^{(r+1)}\ ,
  \qquad\quad
  \Poi{G}{Z}=0\ ,
  \qquad
  \Rscr^{(r+1)} = \Oscr(\eps^{r+1})\ .
\end{equation}
Thus $\Chi=\graff{\Chi_s}$ and the normal form terms $Z=\sum_{s=1}^r
Z_s$ have to satisfy
\begin{equation}
  \label{e.om.eq}
  \Poi{G}{\Chi_s}+Z_s = \Psi_s  \ ,\qquad 1\leq s\leq r\ ,
\end{equation}
where $\Chi_s,\,Z_s$ and $\Psi_s$ are all homogeneous terms of order
$\eps^s$, with
\begin{equation}
  \label{e.Psi.rec}
  \Psi_1=F_1=F\ ,\qquad\qquad \Psi_s := \frac1s F_s +
  \sum_{l=1}^{s-1}\frac{l}{s}\Poi{\Chi_l}{Z_{s-l}}\ .
\end{equation}

At first order $r=1$, we obtain again equation \eqref{e.dnls.1} as
leading order approximation of the dKG breather. Indeed we have to put
into normal form the initial perturbation $\Psi_1:=F_1$. The first
normal form term $Z_1$ represents its average, and it turns out that
at first order the Hamiltonian $K^{(1)}$ can be given by the
corresponding dNLS model
\begin{equation}
  \label{e.K1}
    K^{(1)} = \sum_{j}|\psi_j|^2 +
    \frac{\eps}{p+1}\sum_j|\psi_j|^{2p+2} +
    {\eps}\sum_{|j-h|=1}|\psi_j-\psi_h|^2\ ,
\end{equation}
once complex coordinates are introduced
\begin{equation}
  \label{e.psi.coord}
  u_j=\psi_j+\im\overline{\psi_j}= \frac1{\sqrt2}(\zeta_j+\im\eta_j)\ ,
  \quad\Rightarrow\quad
  \psi_j=\zeta_j/\sqrt2 \ ,
  \qquad
  \im\eta_j=\overline{\zeta_j} \ ,
\end{equation}
so that the quadratic part of $K^{(1)}$ reads
$
\sum_j|\zeta_j|^2 + \eps\sum_{|j-h|=1}|\zeta_j-\zeta_h|^2\ .
$
To average the nonlinearity one follows the same calculations already
used in the Remark~\ref{remark-dnls}
\begin{displaymath}
  \frac1{2\pi}\int_0^{2\pi} u_j^{2p+2}\circ\Phi_g^t dt =
  \frac1{2\pi}\int_0^{2\pi} \frac1{2^{p+1}}\tond{\zeta_j e^{\im t} +
    \im{\eta_j} e^{-\im t}}^{2p+2} dt = \Gamma_p|\zeta_j|^{2p+2}
  \ ,
\end{displaymath}
with $\Gamma_p := \frac1{2^p}\gamma_p$; thus that the nonlinear term reads
$
\frac{\Gamma_p}{2(p+1)}\sum_j |\zeta_j|^{2p+2}\ ,
$
and its standard shape is recovered introducing the complex variable
\eqref{e.psi.coord} which allows to rescale the prefactor $2^{-p}$.
Discrete solitons of \eqref{e.K1} with frequency close to one
\begin{equation}
\label{e.ds.ansatz}
\simone{\psi = \mathcal{A}e^{\im(1-\frac\eps2\Omega)t}}
\end{equation}
are then extremizer of $\Zscr_1:=\eps^{-1}Z_1$ constrained to constant
values of the norm $G=\nu$, thus providing again \eqref{e.dnls.1} with
$\Omega=\Omega(\nu)$.

\subsection{High order approximation and nonlinear stability results}

Let us consider $K:=H-\Pscr$ in \eqref{e.main.est.1} and its equations
\begin{equation}
\label{e.nf.eq}
\dot z = X_K(z)\ ,\qquad\qquad K = G + Z\ ,\qquad\qquad \Poi{Z}{G}=0\ .
\end{equation}
To generalize the discrete soliton approximation, we rewrite the
ansatz \eqref{e.ds.ansatz} as
\begin{equation}
\label{e.ds.ansatz.gen}
\zeta_{\textrm{ds}} = \mathfrak{A} e^{\im(1-\frac12\eps\Omega)t}
\end{equation}
where $\Afr$ is the real amplitude of the soliton\footnote{notice the
  use of the gothic font instead of the calligraphic one to
  distinguish between the objects of the generalized dNLS -- given by
  the higher order normal form -- to those of the standard dNLS},
which is assumed to be small enough to belong to the domain of
validity of the normal form \eqref{e.main.est.1}; once inserted in
\eqref{e.nf.eq}, it provides the equation for $\mathfrak{A}$
\begin{equation}
\label{e.f}
f:=f_0 + \eps f_1=0\ ,\qquad
  \begin{cases}
    f_0&:= \Omega\mathfrak{A} +
    \gamma_p|\mathfrak{A}|^{2p}\mathfrak{A} - \Delta\mathfrak{A}\ ,
    \\
    f_1&:= \eps^{-2}X_{Z}(\mathfrak{A})\ ;
  \end{cases}
\end{equation}
where $f_0$ gives the standard dNLS equation \eqref{e.dnls.1}, while
$f_1$ is the perturbation due to the normal form steps $r\geq 2$; we
recall that, due to $\Poi{G}{Z}=0$, $X_{Z}$ is equivariant under the
action of the symmetry $e^{\im\theta}$
\begin{displaymath}
X_{Z}\tond{\Afr e^{\im(1-\frac12\eps\Omega)t}} =
X_{Z}\tond{\Afr} e^{\im(1-\frac12\eps\Omega)t}\ .
\end{displaymath}
The next statement represent the higher order version of
Theorem~\ref{theorem-existence}, under the same assumption on $\A$ and
$J_\Omega$: it claims the existence of the breather for the
Klein-Gordon close to the discrete soliton of the normal form $K$.

\begin{theorem}
  \label{t.hoa}
  Let $\A$ be a solution of \eqref{e.dnls.1} with $J_\Omega$ of
  \eqref{Jacobian-dnls} invertible in $\ell^2(\ZZ^d,\RR)$. Then:
  \begin{enumerate}
  \item there exists $\eps^*_1<\eps^*$ such that for any
    $0<\eps<\eps^*_1$ there exists a unique solution
    $\Afr(\Omega,\eps)$ of \eqref{e.f}, analytic in
    $\eps$. Moreover, the following estimates hold true
    \begin{equation}
      \label{e.ds.per.bound}
      \norm{\Afr-\mathcal{A}}_{\ell^2} \leq C\eps\ ,
      \qquad\qquad
      \sup\frac{\norm{(J_{\Omega,\eps}-J_\Omega)(z)}_{\ell^2(\ZZ^d;\RR)}}
               {\norm{z}_{\ell^2(\ZZ^d;\RR)}} \leq C\eps\ ,
    \end{equation}
    where $J_{\Omega,\eps}:= D_{\Afr}f(\Afr(\Omega,\eps),\Omega,\eps)$
    is the differential of $f$ evaluated at $\Afr(\Omega,\eps)$.

  \item Let
    \begin{equation}
      \label{e.br.fourier}
      \zeta_{\textrm{br}}(\tau)=\sum_m \Afr^{(m)} e^{\im
        m\tau}\ ,\qquad\qquad\tau:=\omega t\ ,
    \end{equation}
    be the Fourier expansion of the breather of $\dot z=X_H(z)$.
    Then, there exists positive $\eps^*_2<\eps^*_1$ such that for
    every $0<\eps<\eps^*_2$ the breather \eqref{e.br.fourier} admits a
    unique analytic solution branch $\omega(\eps)$ and
    $\graff{\Afr^{(m)}(\eps)}\in\ell^{2,2}(\ZZ;\ell^2(\ZZ^d))$
    satisfying the bounds
    \begin{align}
      \label{e.err-omega.2}
      |\omega(\eps)-1+\frac\eps2\Omega| &\leq C\eps^2\ ,
      \\
      \label{e.err-A.2}
      \| \Afr^{(0)} \|_{\ell^2(\ZZ^d)} + \| \Afr^{(1)} - \Afr
      \|_{\ell^2(\ZZ^d)} + \sum_{m \geq 2} \| \Afr^{(m)} \|_{\ell^2(\ZZ^d)}
      &\leq C\exp\tond{-\frac{c}{\eps}}\ .
    \end{align}
  \item Let $z_{\textrm{ds}}(t) = \simone{T_\Chi^{-1}(\zeta_{\textrm{ds}})(t)}$
    and $z_{\textrm{br}}(t) = \simone{T_\Chi^{-1}(\zeta_{\textrm{br}})(t)}$ be the
    discrete soliton and the discrete breather solutions in the
    original coordinates, and $T_{\textrm{ds}}$ and $T_{\textrm{br}}$
    the corresponding periods; then it holds true
    \begin{equation}
      \label{e.cor.est}
      \sup_{|t|\leq \max\graff{T_{\textrm{ds}},T_{\textrm{br}}}}
      \norm{z_{\textrm{ds}}(t) - z_{\textrm{br}}(t)} \leq
      C\exp\tond{-\frac{c}\eps} \ .
    \end{equation}
  \end{enumerate}
\end{theorem}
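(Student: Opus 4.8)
The plan is to establish the three assertions in sequence, each reducing to an implicit function theorem argument built on the quantitative normal form of Theorem~\ref{t.main}. For part (1) I would apply the implicit function theorem to the map $f=f_0+\eps f_1$ of \eqref{e.f} at $\eps=0$, in the spirit of the proof of Theorem~\ref{theorem-existence}. The first normal form term $Z_1$ is the dNLS energy and is already carried by $f_0$, so the remaining normal form is $\Oscr(\eps^2)$ and the field $f_1=\eps^{-2}X_{Z}(\cdot)$ is bounded on the ball $B_{R,\dfr}$ uniformly in the truncation order; hence $f$ is $C^\omega$ in $(\Afr,\Omega,\eps)$ and $f(\cdot,\Omega,0)=f_0$ is the stationary dNLS vector field \eqref{e.dnls.1}. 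Since $f_0(\A)=0$ and $D_{\Afr}f(\A,\Omega,0)=J_\Omega$ of \eqref{Jacobian-dnls} is boundedly invertible on $\ell^2(\ZZ^d,\RR)$, the implicit function theorem yields a unique branch $\Afr(\Omega,\eps)$, analytic in $\eps$, with $\|\Afr-\A\|_{\ell^2}\leq C\eps$. The second estimate in \eqref{e.ds.per.bound} then follows from analyticity in $\eps$ together with $\|\Afr-\A\|=\Oscr(\eps)$, since $J_{\Omega,\eps}-J_\Omega=D_{\Afr}f(\Afr(\Omega,\eps),\Omega,\eps)-D_{\Afr}f(\A,\Omega,0)$ is $\Oscr(\eps)$ in operator norm; in particular $J_{\Omega,\eps}$ is invertible for small $\eps$, so $\zeta_{\textrm{ds}}$ in \eqref{e.ds.ansatz.gen}, with frequency $\omega_{\textrm{ds}}=1-\frac12\eps\Omega$, is a nondegenerate periodic orbit of the normal form equations \eqref{e.nf.eq}, $\dot z=X_K(z)$ with $K=G+Z$.

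For part (2) I would view the breather equation for $\dot z=X_H(z)$, with $H=K+\Pscr$ by \eqref{e.main.est.1}, as an exponentially small perturbation of the breather equation for $\dot z=X_K(z)$, whose exact solution is $\zeta_{\textrm{ds}}$. Because $\Poi{G}{Z}=0$, the $K$-flow commutes with the rotation symmetry and $X_Z$ is equivariant, so in the Fourier representation $\zeta_{\textrm{ds}}$ carries only the harmonic $m=1$, with coefficient $\Afr$, and no other. Running the Lyapunov--Schmidt scheme of Theorem~\ref{theorem-existence} for $H=K+\Pscr$ in place of the scaled dKG, the range equation (harmonics $m\neq\pm1$) is treated exactly as there: for $K$ alone its right-hand side vanishes -- by equivariance of $X_Z$ a single-harmonic input produces no higher harmonics -- so the solution is identically zero, whereas the addition of $\Pscr$ introduces a source bounded in the pertinent norm by $\nvec{\Pscr}{\dfr}\leq\mu\exp(-1/\mu)$, so the range component $\Afr^\flat$ is $\Oscr(\exp(-c/\eps))$. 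Inserting this into the kernel equation ($m=1$) reduces it, by the computation in Remark~\ref{remark-dnls}, to $f(\Afr^{(1)},\Omega,\eps)+(\textrm{source of size }\exp(-c/\eps))=0$; since $D_{\Afr}f(\Afr,\Omega,\eps)=J_{\Omega,\eps}$ is invertible by part (1), the implicit function theorem produces a unique branch $(\omega(\eps),\{\Afr^{(m)}(\eps)\})$ of \eqref{e.br.fourier}, analytic in $\eps$, with $\|\Afr^{(1)}-\Afr\|_{\ell^2}$ and $|\omega(\eps)-\omega_{\textrm{ds}}|$ both $\Oscr(\exp(-c/\eps))$, which together with the bound on $\Afr^\flat$ gives \eqref{e.err-A.2} and, a fortiori, \eqref{e.err-omega.2}. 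As in Theorem~\ref{theorem-existence}, evenness (reversibility) of the breather is imposed to remove the time-translation and rotational degeneracies, so that the reduced linearized operator is precisely $J_{\Omega,\eps}$.

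For part (3) I would write $z_{\textrm{ds}}=T_\Chi^{-1}(\zeta_{\textrm{ds}})$ and $z_{\textrm{br}}=T_\Chi^{-1}(\zeta_{\textrm{br}})$; by \eqref{e.change.coord} and \eqref{e.incl.balls} the map $T_\Chi^{-1}$ is $\Oscr(\eps)$-close to the identity, hence Lipschitz with constant $1+\Oscr(\eps)$ on the balls in play, so $\|z_{\textrm{ds}}(t)-z_{\textrm{br}}(t)\|\leq(1+\Oscr(\eps))\|\zeta_{\textrm{ds}}(t)-\zeta_{\textrm{br}}(t)\|$. Now $\zeta_{\textrm{ds}}$ solves $\dot z=X_K(z)$ and $\zeta_{\textrm{br}}$ solves $\dot z=X_K(z)+X_\Pscr(z)$ with $\sup_{z\in B_{R,\dfr}}\|X_\Pscr(z)\|=R\,\nvec{\Pscr}{\dfr}\leq R\mu\exp(-1/\mu)$, while $\|\zeta_{\textrm{br}}(0)-\zeta_{\textrm{ds}}(0)\|\leq C\exp(-c/\eps)$ by part (2); since $X_K$ is Lipschitz on $B_{R,\dfr}$ with an $\eps$-independent constant, Gronwall's inequality over the $\Oscr(1)$ time interval $|t|\leq\max\{T_{\textrm{ds}},T_{\textrm{br}}\}$ gives $\|\zeta_{\textrm{ds}}(t)-\zeta_{\textrm{br}}(t)\|\leq C\exp(-c/\eps)$, and hence \eqref{e.cor.est} after composing with $T_\Chi^{-1}$.

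I expect the crux to be part (2): one must verify that, thanks to $\Poi{G}{Z}=0$, the Lyapunov--Schmidt reduction for the breather of $H=K+\Pscr$ can be re-centred at the exact relative equilibrium $\zeta_{\textrm{ds}}$ of the normal form -- so that the higher Fourier harmonics are generated only by the exponentially small remainder $\Pscr$, and not by the $\Oscr(\eps)$ coupling visible in the original coordinates -- and that the nondegeneracy required for the reduced bifurcation equation is precisely the invertibility of $J_{\Omega,\eps}$ from part (1). A secondary technical point is to keep $\zeta_{\textrm{ds}}$ and $\zeta_{\textrm{br}}$ inside the ball $B_{R,\dfr}$ on which Theorem~\ref{t.main} applies, which is why the dNLS soliton $\A$ is assumed small enough.
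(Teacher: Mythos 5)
Your proposal follows essentially the same route as the paper for parts (1) and (2): the implicit function theorem applied to $f=f_0+\eps f_1$ with $D_{\Afr}f(\A,\Omega,0)=J_\Omega$ invertible, and then a Lyapunov--Schmidt reduction whose crux --- which you correctly isolate --- is that $\Poi{G}{Z}=0$ forces $\Pi_{\hat W_0}\partial_\eta Z(\bar\Afr^\sharp)=0$, so the higher harmonics are sourced only by the exponentially small remainder, giving $\bar\Afr^\flat=\Oscr(\exp(-c/\eps))$, while the kernel equation becomes an exponentially small perturbation of \eqref{e.f} solved via the invertibility of $J_{\Omega,\eps}$. (The paper phrases this with the truncated remainder $\Rscr^{(r+1)}=\Oscr(\eps^{r+1})$ and substitutes $r=r_{opt}(\eps)$ only at the end; your direct use of the optimized remainder $\Pscr$ from Theorem~\ref{t.main} is equivalent.) In part (3) you genuinely deviate: the paper compares the two explicit Fourier expansions termwise over one period, which rests on the bound $\norm{\Afr^{(1)}e^{\im\omega t}-\Afr e^{\im(1-\frac\eps2\Omega)t}}\leq C\norm{\Afr^{(1)}-\Afr}$ and hence implicitly on the frequency mismatch being exponentially small (not merely the $\Oscr(\eps^2)$ recorded in \eqref{e.err-omega.2}); your Gronwall comparison of the flows of $X_K$ and $X_K+X_\Pscr$ over the $\Oscr(1)$ time interval, with exponentially close initial data from part (2) and $\sup\norm{X_\Pscr}$ exponentially small, reaches the same conclusion while making that point explicit, at the cost of invoking the ($\eps$-uniform) Lipschitz bound on $X_K$ in $B_{R,\dfr}$. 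Both arguments then transfer to the original coordinates through the $\Oscr(1)$ Lipschitz constant of $T_\Chi^{-1}$, exactly as in the paper.
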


We now assume a stronger condition than the invertibility of the
Jacobian operator $J_\Omega$; we require $\A$ to be a nondegenerate
extremizer for $\Zscr_1$ constrained to constant values of the norm
$G$. Under this assumption, which implies invertibility of $J_\Omega$,
it follows that for $\eps$ sufficiently small also the discrete
soliton $\Afr$ obtained in Proposition~4.1 is a nondegenerate
extremizer for $\Zscr:=\eps^{-1}Z$ constrained to the sphere
$\Sph:=\graff{G(z)=\nu}$, with $\nu$ sufficiently small (as required
by the normal form construction). As a consequence, $\Afr$ is an
orbitally stable periodic orbits (see \cite{Bam96,PalP16,Wei99}) for
the normal form $K=G+Z$. \simone{Once we add the remainder $P$ to $K$ we
  can't guarantee anymore that such a geometry is preserved, since $P$
  is not known to Poisson commute with the $\ell^2$ norm $G$. However,
  the smallness of $P$ in the norm \eqref{e.norms} ensures that the
  orbit remains in a prescribed region for long times.} We are going
to show that $\Afr$ is an approximate periodic orbit for the full
system $H=G+Z+P$ which is orbitally stable for exponentially long
times and that the same kind of stability holds true for the
Klein-Gordon breathers.

Let us introduce with $\bar{\Afr}:=\graff{\Afr^{(m)}}$ and denote with
$\Orb(\bar\Afr)$ the closed curve described by the Klein-Gordon
breather
\begin{displaymath}
  \Orb(\bar\Afr):=\graff{\simone{\zeta}_{\textrm{br}}(t),\,t\in[0,T]}
  \qquad\qquad
  \Orb:=T_\Chi^{-1}\Orb(\bar\Afr) \ .
\end{displaymath}
The next Theorem provides the orbital stability of $\Orb(\bar\Afr)$.

\begin{theorem}
  \label{t.exp.stab}
  \simone{Assume $\A$ to be a nondegenerate extremizer for $\Zscr_1$
  constrained to constant values of the norm $G$.} Let $z_0\in
  B_{R,3\dfr}$ with $R< 1$. Then $\forall\ 0<\mu\ll 1$,
  $\exists\ 0<\delta\ll 1$ such that
  \begin{equation}
    \label{e.lt.est}
    \inf_{w\in \Orb}\norm{z_0-w}< \delta
    \quad\Rightarrow\quad
    \inf_{w\in \Orb}\norm{\Phi^t_H(z_0)-w}<
    \mu\ ,\quad |t|< \exp\tond{\frac{c}\eps}\ .
  \end{equation}
\end{theorem}

\begin{remark}
In Theorems~\ref{t.hoa} and~\ref{t.exp.stab}, $c$ and $C$ are
suitable constants independent of $\eps$.
\end{remark}


\subsection{Proof of Theorem~\ref{t.main} (Normal Form Theorem)}
\label{ss:p1}

We give a sketch of the proof, which would be long and technical
if all the details were included. The estimates here included can be
obtained by following \cite{BGG89,BamG93}.

Recursive estimates, which are the most technical aspect of the whole
construction, need estimates on the initial size of the perturbation
$F$ and its vector field $X_F$. We thus introduce the main quantities
$E$ and $\omega_1$ providing the initial estimates
\begin{equation*}
  \nfun{F}{0}\leq E:= \eps\quadr{4d R^2 + \frac1{2p+2}R^{2p+2}}\ ,
  \qquad\quad
  \nvec{F}{0}\leq \omega_1:= 2\eps\quadr{C_d+R^{2p}}\ .
\end{equation*}






\begin{remark}
The magnitudes of $E$ and $\omega_1$ are coherent: since
$E=\Oscr(\eps R^2)$, then its differential, divided by $R$ according
to the definition of $\nvec{\cdot}{\dfr}$ in \eqref{e.norms}, has to
be $\Oscr(\eps)$.
\end{remark}



In order to solve \eqref{e.om.eq} we average along the periodic flow
$\Phi^t_G$ of period $2\pi$, as claimed by the following Lemma (for
the proof, see \cite{Bam96}):
\begin{lemma}
  \label{l.hom.eq}
  The homological equation~\eqref{e.om.eq}, i.e.
$
  \Poi{G}{\Chi} = \Psi(z) - Z(z)\ ,
$
  is solved by
  \begin{equation*}
    Z(z) = \frac1{2\pi}\int_0^{2\pi} \Psi\circ\Phi^t_G (z)dt\ ,
    \qquad\quad
    \Chi(z) =
    \frac1{2\pi}\int_0^{2\pi} t\quadr{\tond{\Psi-Z}\circ\Phi^t_G} (z)dt\ ;
  \end{equation*}
  for any $\dfr$ it satisfies the following estimates
  \begin{equation}
    \label{e.est.hom}
    \begin{aligned}
      \nfun{Z}{\dfr}&\leq \nfun{\Psi}{\dfr}\ ,
      &\qquad\qquad\quad
      \nvec{Z}{\dfr}&\leq \nvec{\Psi}{\dfr}\ ,
      \\
      \nfun{\Chi}{\dfr}&\leq 2\pi\nfun{\Psi}{\dfr}\ ,
      &\qquad\qquad\quad
      \nvec{\Chi}{\dfr}&\leq 2\pi\nvec{\Psi}{\dfr}\ .
      \end{aligned}
  \end{equation}
\end{lemma}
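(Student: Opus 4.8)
The plan is the classical averaging construction for the homological equation, whose only genuinely special input is the nature of the unperturbed flow. First I would record that $\Phi^t_G$ is the simultaneous rotation by the angle $t$ in each oscillator plane $(u_j,v_j)$: it is a one-parameter group of \emph{linear symplectic isometries} of $\Pscr$, of period $2\pi$, so that $\Phi^t_G$ maps every ball $B_{R,\dfr}$ onto itself and its differential $D\Phi^t_G=\Phi^t_G$ is orthogonal at every point. These two facts are precisely what make all four constants in \eqref{e.est.hom} independent of $\dfr$ and of $R$.

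To solve \eqref{e.om.eq}, I would use the identity $\frac{d}{dt}\tond{g\circ\Phi^t_G}=\Poi{g}{G}\circ\Phi^t_G$ for smooth $g$. It shows at once that the proposed $Z$, being the $t$-average of $\Psi$ along the flow, satisfies $Z\circ\Phi^s_G=\frac{1}{2\pi}\int_0^{2\pi}\Psi\circ\Phi^{t+s}_G\,dt=Z$ by $2\pi$-periodicity and a change of variables, hence $\Poi{Z}{G}=0$; in particular $\Psi-Z$ has zero $t$-average along the flow, which is the solvability condition. For $\Chi$ I would take the primitive of $\Psi-Z$ along the flow: setting $\psi_z(u):=\tond{\Psi-Z}\circ\Phi^u_G(z)$, a $2\pi$-periodic function with $\int_0^{2\pi}\psi_z(u)\,du=0$, the $\Chi$ of the statement satisfies $\Chi\circ\Phi^s_G(z)=\frac{1}{2\pi}\int_s^{2\pi+s}(u-s)\,\psi_z(u)\,du=\frac{1}{2\pi}\int_s^{2\pi+s}u\,\psi_z(u)\,du$, the $s$-term dropping because the integral of $\psi_z$ over a period vanishes; differentiating in $s$ and using $\psi_z(2\pi+s)=\psi_z(s)$ gives $\frac{d}{ds}\big|_{s=0}\tond{\Chi\circ\Phi^s_G}=\psi_z(0)$, i.e.\ $\Chi$ solves the homological equation, up to the Poisson-bracket sign convention fixed in the paper.

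For the four estimates I would argue uniformly in $t$. Since $\Phi^t_G(B_{R,\dfr})=B_{R,\dfr}$, for each fixed $t$ one has $\sup_{z\in B_{R,\dfr}}|\Psi\circ\Phi^t_G(z)|=\nfun{\Psi}{\dfr}$, so pulling the supremum through the integral yields $\nfun{Z}{\dfr}\leq\frac{1}{2\pi}\int_0^{2\pi}\nfun{\Psi}{\dfr}\,dt=\nfun{\Psi}{\dfr}$ and, with $\nfun{\Psi-Z}{\dfr}\leq 2\nfun{\Psi}{\dfr}$ from the triangle inequality, $\nfun{\Chi}{\dfr}\leq\frac{1}{2\pi}\tond{\int_0^{2\pi}t\,dt}\,\nfun{\Psi-Z}{\dfr}=\pi\,\nfun{\Psi-Z}{\dfr}\leq 2\pi\,\nfun{\Psi}{\dfr}$. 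For the vector-field norms I would use that for a linear symplectic isometry $\Phi$ one has $X_{g\circ\Phi}=\Phi^{-1}\circ X_g\circ\Phi$, hence $\norm{X_{g\circ\Phi^t_G}(z)}=\norm{X_g(\Phi^t_G z)}$; combined with $X_Z=\frac{1}{2\pi}\int_0^{2\pi}X_{\Psi\circ\Phi^t_G}\,dt$ (and the analogous formula for $X_\Chi$) and again with domain invariance, the same two steps give $\nvec{Z}{\dfr}\leq\nvec{\Psi}{\dfr}$ and $\nvec{\Chi}{\dfr}\leq 2\pi\nvec{\Psi}{\dfr}$; note that the factor $1/R$ built into $\nvec{\cdot}{\dfr}$ cancels, which is why these bounds are $R$-independent.

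Everything above is routine; the one point that deserves care — and which I regard as the only real obstacle — is the infinite-dimensional functional-analytic bookkeeping: one must check that the integrals defining $Z$ and $\Chi$ are well defined as analytic functions on $B_{R,\dfr}$ whose Hamiltonian vector fields are obtained by differentiating under the integral sign, that $\sup$ and $\int_0^{2\pi}$ may be interchanged (continuity of the integrands on the compact interval $[0,2\pi]$ suffices), and that $X_{g\circ\Phi}=\Phi^{-1}X_g\Phi$ holds in the Hilbert-space setting for the linear map $\Phi=\Phi^t_G$. Since all of this is carried out in \cite{Bam96}, in the write-up I would cite it and keep explicit only the averaging identity and the two constants $1$ and $2\pi$.
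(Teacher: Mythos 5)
Your proposal is correct and follows essentially the same route as the paper: averaging along the $2\pi$-periodic flow $\Phi^t_G$ to define $Z$, the $t$-weighted integral (with the zero-mean property of $\Psi-Z$) to define $\Chi$, and the fact that $\Phi^t_G$ is a linear isometry preserving the balls $B_{R,\dfr}$ to extract the constants $1$ and $2\pi$ from the integral representations of $X_Z$ and $X_\Chi$. The only cosmetic difference is that you verify directly that the stated $\Chi$ solves the homological equation by differentiating $\Chi\circ\Phi^s_G$ at $s=0$, whereas the paper runs the same computation backwards (integration by parts recovers the formula from the equation); the sign discrepancy you flag is indeed just the Poisson-bracket/flow convention and not a gap.
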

At first order, Lemma \ref{l.hom.eq} immediately provides the
estimates
\begin{equation}
  \label{e.est.step1}
  \nvec{Z_1}{\dfr}\leq\omega_1\ ,\qquad\qquad \nvec{\Chi_1}{\dfr}\leq
  \phi:=2\pi\omega_1\ ,
\end{equation}
which introduce the main perturbation parameter $\phi=\Oscr(\eps)$ of
the normal form scheme. Let now the arbitrary integer $r\geq 1$ be the
order of the normal form construction, i.e. the number of generating
functions $\Chi_s$ in the generating sequence
$\Chi=\graff{\Chi_s}_{s=1}^r$ and thus the number of homological
equations \eqref{e.om.eq} to be solved. The first important result
gives the bounds for the quantities involved in \eqref{e.om.eq}
\begin{lemma}
  \label{p.rec.est}
  Let $\dfr_s=\frac{s \dfr}{r}$, with $\dfr\leq \frac14$. Then, for any
  $1\leq s\leq r$ it holds true
  \begin{equation}
    \label{e.rec.fin}
      \nvec{\Theta}{\dfr_{s-1}} \leq
      \frac{\omega_1}{s}\tond{\frac{6r\phi}{\dfr}}^{s-1}
      \ \
      \forall \Theta\in\{F_s,\Psi_s,Z_s\}\ ,
      \quad
      \nvec{\Chi_s}{\dfr_{s-1}} \leq
      \frac{\phi}{s}\tond{\frac{6r\phi}{\dfr}}^{s-1} .
\end{equation}
\end{lemma}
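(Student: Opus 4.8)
The plan is to prove Lemma~\ref{p.rec.est} by induction on $s$, using the recursive
definitions \eqref{e.F.rec}, \eqref{e.Psi.rec} together with the homological
estimates \eqref{e.est.hom} of Lemma~\ref{l.hom.eq} and a Cauchy-type estimate for
the Poisson bracket. The essential analytic ingredient is a bound of the form
$\nvec{\Poi{g}{h}}{\dfr'}\leq \frac{C}{\dfr-\dfr'}\nvec{g}{\dfr}\nvec{h}{\dfr}$
for $\dfr'<\dfr$, i.e.\ one pays a factor proportional to the inverse of the
analyticity loss each time a bracket is taken; this is the standard estimate for
Hamiltonian vector fields on nested balls (see Section~5 of \cite{BamG93}), and I
would quote it rather than reprove it. The subdivision $\dfr_s=\frac{s\dfr}{r}$ is
chosen precisely so that at step $s$ each of the $s$ brackets in the recursion has an
analyticity margin $\dfr_s-\dfr_{s-1}=\dfr/r$ available, which is what produces the
factor $(6r\phi/\dfr)^{s-1}$ and the combinatorial prefactor $1/s$.

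First I would set up the induction: the base case $s=1$ is exactly
\eqref{e.est.step1}, since $F_1=\Psi_1=F$, $Z_1$ is its $G$-average, and
$\Chi_1$ is given by Lemma~\ref{l.hom.eq}, giving $\nvec{\Chi_1}{\dfr}\leq\phi$ and
$\nvec{F_1}{0},\nvec{\Psi_1}{0},\nvec{Z_1}{0}\leq\omega_1$. For the inductive step,
assume \eqref{e.rec.fin} holds for all orders $<s$. I would estimate $F_s$ from
$F_r=\sum_{l=1}^{r-1}\frac{l}{r-1}\Poi{\Chi_l}{F_{r-l}}$ (here with $r$ replaced by
$s$): apply the Cauchy bracket estimate on the pair of radii
$(\dfr_{s-1},\dfr_{l-1})$ versus $(\dfr_{s-1},\dfr_{s-l-1})$, insert the inductive
bounds for $\nvec{\Chi_l}{\dfr_{l-1}}$ and $\nvec{F_{s-l}}{\dfr_{s-l-1}}$, sum the
resulting geometric/combinatorial series over $l$, and check that the constant
$6$ in $(6r\phi/\dfr)$ is large enough to absorb the numerical factors (the number
$6$ is exactly what makes the series telescope/dominate; this is the one place a
genuine, if routine, computation is needed). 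The same argument, now using
\eqref{e.Psi.rec}, controls $\Psi_s=\frac1s F_s+\sum_{l=1}^{s-1}\frac ls\Poi{\Chi_l}{Z_{s-l}}$
in terms of $\nvec{F_s}{\dfr_{s-1}}$ and the inductive bounds on $\Chi_l,Z_{s-l}$.
Finally, Lemma~\ref{l.hom.eq} applied at margin $\dfr_{s-1}$ gives
$\nvec{Z_s}{\dfr_{s-1}}\leq\nvec{\Psi_s}{\dfr_{s-1}}$ and
$\nvec{\Chi_s}{\dfr_{s-1}}\leq 2\pi\nvec{\Psi_s}{\dfr_{s-1}}$, which closes the
induction (note $2\pi\omega_1=\phi$, so the $\Chi_s$ bound carries the extra factor
$\phi/\omega_1$ exactly as stated).

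The main obstacle is the bookkeeping in the inductive step: one must verify that the
ansatz $\nvec{\Theta}{\dfr_{s-1}}\leq\frac{\omega_1}{s}(6r\phi/\dfr)^{s-1}$ is
\emph{reproduced} and not merely bounded by something larger, i.e.\ that the sums
$\sum_{l=1}^{s-1}\frac{l}{s}\cdot\frac{1}{l}\cdot\frac{1}{s-l}\cdot\frac{r}{\dfr}\cdot(6r\phi/\dfr)^{s-2}$
(schematically) collapse back to $\frac{1}{s}(6r\phi/\dfr)^{s-1}$ after the Cauchy
factor $r/\dfr$ from the bracket is accounted for. This is the usual delicate point
in analytic normal-form schemes à la Giorgilli--Galgani; it works because
$\sum_{l=1}^{s-1}\frac{1}{l(s-l)}=\frac{2}{s}\sum_{l=1}^{s-1}\frac1l\leq\frac{2\ln s}{s}\cdot 2$
is comfortably beaten by the constant $6$. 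Everything else — the definition of
$\dfr_s$, the telescoping of analyticity losses, and the translation of the field
estimate $\nvec{\cdot}{\cdot}$ through the recursion — is structural and follows
\cite{BGG89,BamG93} verbatim, so I would present it compactly and refer the reader
there for the fully explicit constants.
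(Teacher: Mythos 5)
Your strategy --- induction on $s$, Lemma~\ref{l.hom.eq} to pass from $\Psi_s$ to $Z_s$ and $\Chi_s$ (whence the extra factor $\phi/\omega_1=2\pi$ in the $\Chi_s$ bound), and a Cauchy-type estimate for the Poisson bracket of vector fields on the nested balls --- is precisely the scheme of \cite{BGG89,BamG93} to which the paper defers (the lemma is stated there without proof), and your base case is correct. The gap is in the closing of the induction. If every bracket $\Poi{\Chi_l}{F_{s-l}}$ in \eqref{e.F.rec} is charged the \emph{uniform} loss $\dfr_{s-1}-\dfr_{s-2}=\dfr/r$, as in your schematic sum $\sum_{l=1}^{s-1}\frac{l}{s}\cdot\frac1l\cdot\frac1{s-l}\cdot\frac{r}{\dfr}$, the sum over $l$ produces the harmonic number $H_{s-1}=\sum_{k=1}^{s-1}\frac1k$, and reproducing the ansatz $\frac{\omega_1}{s}\tond{6r\phi/\dfr}^{s-1}$ requires $H_{s-1}$ (times the numerical constant of the bracket estimate) to be at most $6$. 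This is not ``comfortably'' true: $H_{s-1}\sim\ln s$ is unbounded, and $s$ runs up to $r=r_{opt}\sim\dfr/(6e\phi)\sim\eps^{-1}$, so for small $\eps$ the induction breaks at a finite order; your own comparison of $\frac{4\ln s}{s}$ against $\frac{6}{s}$ already fails at $s=5$.

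The missing idea --- and the actual reason for the choice $\dfr_s=s\dfr/r$ --- is the one you set up when you name the pairs of radii and then abandon in the final check: the bracket estimate must be used with the \emph{full, asymmetric} margins available to each factor, namely $\dfr_{s-1}-\dfr_{l-1}=(s-l)\dfr/r$ for the derivative of $X_{\Chi_l}$ and $\dfr_{s-1}-\dfr_{s-l-1}=l\dfr/r$ for that of $X_{F_{s-l}}$. This replaces the factor $\frac{2r}{\dfr}$ by $\frac{r}{\dfr}\tond{\frac1{s-l}+\frac1l}=\frac{r}{\dfr}\cdot\frac{s}{l(s-l)}$, and the relevant sum becomes $s\sum_{l=1}^{s-1}\frac{1}{l(s-l)^2}=\frac{2H_{s-1}}{s}+\sum_{k=1}^{s-1}\frac{1}{k^2}\leq 1+\frac{\pi^2}{6}$, bounded uniformly in $s$ and $r$; only then does a fixed constant (the $6$) close the induction for $F_s$, and likewise for $\Psi_s$ via \eqref{e.Psi.rec} and for $Z_s,\Chi_s$ via Lemma~\ref{l.hom.eq}. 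With that correction your argument is the standard one; without it the step as written fails exactly in the regime $r\sim\eps^{-1}$ needed for the exponential estimate of Theorem~\ref{t.main}.
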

The above Lemma, and in particular the last of \eqref{e.rec.fin},
allows to control the deformation of functions and vector fields
under the canonical transformation; indeed, let
  \begin{equation}
  \begin{aligned}
    T_\Chi f &=\sum_{r\geq 0} f_r \ ,
    &\qquad
    f_r &:=\sum_{j=1}^r\frac{j}{r} L_{\Chi_j}f_{r-j} \ ,
    &\qquad
    f_0 &=f\ ,
    \\
    T_\Chi g &=\sum_{r\geq 1}g_r\ ,
    &\qquad
    g_r &:=\sum_{j=1}^{r-1}\frac{j}{r-1}L_{\Chi_j}g_{r-j}\ ,
    &\qquad
    g_1 &=g\ ,
  \end{aligned}
\end{equation}
then the following holds true
\begin{lemma}
  \label{l.10.3}
Let us introduce $M_1<M_2<1$
\begin{equation}
\label{e.M1.M2}
M_1(\phi,r):=\frac{\phi(e+3r)}{\dfr}\ ,\qquad\qquad
M_2(\phi,r):=\frac{\phi(2e+3r)}{\dfr}\ .
\end{equation}
Then, for any $\dfr\leq\frac14$ and any $r\geq 1$ the following bounds
hold true
\begin{equation}
  \label{e.10.3.vec}
  \begin{aligned}
    \nfun{z_r}{\dfr}&\leq M_1^{r-1}\tilde G_1\ ,
    &\qquad
    \nfun{f_r}{\dfr}&\leq M_1^{r-1} \tilde B_1\ ,
    &\qquad
    \nfun{g_r}{\dfr}&\leq M_1^{r-2} \tilde\Gamma_2\ ,
    \\
    \null&\null
    &\qquad
    \nvec{f_r}{\dfr}&\leq M_2^{r-1} \bar B_1\ ,
    &\qquad
    \nvec{g_r}{\dfr}&\leq M_1^{r-2}\bar\Gamma_2\ ,
  \end{aligned}
\end{equation}
together with
\begin{equation*}
    \tilde G_1:=\frac{R\phi}{\dfr},
    \quad
    \tilde B_1:=\frac\phi{\dfr}\nfun{f}{0},
    \quad
    \tilde\Gamma_2:=\frac\phi{\dfr}\nfun{g}{0},
    \quad
    \bar B_1:=\frac{2\phi}{\dfr} \nvec{f}{0},
    \quad
    \bar\Gamma_2:=\frac{2\phi}\dfr \nvec{g}{0},
\end{equation*}
\end{lemma}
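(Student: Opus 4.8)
The plan is to prove the five estimates in \eqref{e.10.3.vec} simultaneously by induction on the normal-form order $r$, using two ingredients: (i) the elementary analytic estimates on the nested balls $B_{R,\dfr_s}$, $\dfr_s=s\dfr/r$, from Section~5 of \cite{BamG93} (see also \cite{Bam96}) --- a Cauchy inequality that, for $\dfr<\dfr'$, converts a sup-norm bound on $B_{R,\dfr}$ into a gradient bound on the smaller ball $B_{R,\dfr'}$ at the cost of a factor $(\dfr'-\dfr)^{-1}$, and its corollary that $\Poi{g}{\Chi}$ is controlled on $B_{R,\dfr'}$ by $(\dfr'-\dfr)^{-1}$ times $\nfun{g}{\dfr}\,\nvec{\Chi}{\dfr}$, together with the analogue in which $\nfun{g}{\dfr}$ is replaced by $\nvec{g}{\dfr}$; and (ii) the generating-function bounds $\nvec{\Chi_j}{\dfr_{j-1}}\leq\frac{\phi}{j}\tond{6r\phi/\dfr}^{j-1}$ of Lemma~\ref{p.rec.est}. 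The architecture of the scheme is that the analyticity budget $\dfr$ is spent in equal increments $\dfr/r$ along the $r$ recursion levels, so that in $f_r=\sum_{j=1}^r\frac{j}{r}L_{\Chi_j}f_{r-j}$ the $j$-th summand loses exactly $\dfr_r-\dfr_{r-j}=j\dfr/r$ of analyticity, the weight $j/r$ cancels the loss factor $r/(j\dfr)$, and the residual geometric growth is carried entirely by the Lemma~\ref{p.rec.est} bound on $\Chi_j$; summing over $j$ and inserting the definitions of $\phi$ and $\dfr_s$ then produces the powers of the constants $M_1,M_2$ of \eqref{e.M1.M2}.

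First I would handle the three sup-norm bounds. The recursions for $z_r$, with $z_0=z$ the identity map (differentiated componentwise, so that only scalar Poisson brackets occur, exactly as for $f$, which gives $\tilde G_1=R\phi/\dfr$), and for $f_r$, with $f_0=f$ (giving $\tilde B_1=\frac{\phi}{\dfr}\nfun{f}{0}$), have the same structure, so one induction covers both: assuming $\nfun{f_s}{\dfr_s}\leq M_1^{s-1}\tilde B_1$ for $s<r$, apply the bracket estimate of (i) to each summand $\frac{j}{r}L_{\Chi_j}f_{r-j}$ with $\Chi_j$ bounded on $B_{R,\dfr_{j-1}}\supset B_{R,\dfr}$ by Lemma~\ref{p.rec.est} and $f_{r-j}$ bounded inductively on $B_{R,\dfr_{r-j}}$, restrict to $B_{R,\dfr}$, and sum; the resulting geometric-type series closes to $M_1^{r-1}\tilde B_1$ since $M_1<1$ under the standing smallness on $\eps$ of Theorem~\ref{t.main} (which is also what yields $M_1<M_2<1$). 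The base case $r=1$ is the single bracket $L_{\Chi_1}f_0$. The $g$-chain is identical except that it starts one level later, at $g_1=g$ rather than $g_0$; this shifts the exponent to $r-2$ and gives $\tilde\Gamma_2=\frac{\phi}{\dfr}\nfun{g}{0}$, with base case $r=2$, $g_2=L_{\Chi_1}g$.

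For the two vector-field estimates I would rerun the same induction carrying $\nvec{\cdot}{\dfr}$ and invoking the gradient form of the bracket estimate; the one new feature is that $X_{\Poi{g}{\Chi_j}}=[X_g,X_{\Chi_j}]$ is a sum of two commutator terms, each requiring one extra Cauchy differentiation, and this is precisely what promotes the additive constant $e$ in $M_1$ to $2e$ in $M_2$, yielding $\nvec{f_r}{\dfr}\leq M_2^{r-1}\bar B_1$ with $\bar B_1=\frac{2\phi}{\dfr}\nvec{f}{0}$; for $g$ it is cheaper to bound the gradient of $g_r$ by a single Cauchy estimate from the already-proved $\nfun{g_r}{\dfr}\leq M_1^{r-2}\tilde\Gamma_2$, which keeps $M_1$ and produces $\bar\Gamma_2=\frac{2\phi}{\dfr}\nvec{g}{0}$. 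The main obstacle is the bookkeeping itself: one must track at every step on which nested ball each object is controlled, verify that the equal-increment partition $\dfr_s=s\dfr/r$ makes the telescoping sums over $j$ collapse to exactly $M_1^{r-1}$ and $M_2^{r-1}$ with the explicit constants $e,2e,3r$ of \eqref{e.M1.M2}, and check that the auxiliary series converge uniformly in $r$. This is the most delicate part of the construction; rather than redo it I would carry it out following the quantitative estimates of \cite{BGG89,BamG93} with the domain partition and generating-function bounds described above.
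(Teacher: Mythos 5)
The paper never actually proves this lemma: in Subsection~\ref{ss:p1} the authors state explicitly that the recursive estimates are ``obtained by following \cite{BGG89,BamG93}'', so the paper's own ``proof'' is a citation to the Lie-transform machinery of those references. Your proposal reconstructs precisely that machinery --- simultaneous induction on $r$, Cauchy estimates on the nested domains $B_{R,\dfr_s}$ with $\dfr_s=s\dfr/r$, the weight $j/r$ in the recursion cancelling the $r/(j\dfr)$ domain-loss factor, and the generating-function bounds of Lemma~\ref{p.rec.est} feeding the geometric growth --- so in approach and in level of detail you are doing essentially what the paper does.

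One step, as you describe it, would not deliver the stated constant. For $\nvec{g_r}{\dfr}$ you propose a single Cauchy estimate applied to the already-proved scalar bound $\nfun{g_r}{\dfr}\leq M_1^{r-2}\tilde\Gamma_2$. But a Cauchy estimate converts a sup-norm bound on $g_r$ into a vector-field bound proportional to $\nfun{g}{0}$ (divided by the width of the extra domain shrinkage, which you would also have to reserve in the scalar induction), whereas the lemma asserts $\bar\Gamma_2=\frac{2\phi}{\dfr}\nvec{g}{0}$, a constant controlled by the vector field of $g$ itself; there is no general inequality bounding $\nfun{g}{0}$ by $R^2\nvec{g}{0}$, so the two are not interchangeable. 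The factor $2$ in $\bar\Gamma_2$ (and in $\bar B_1$) is the signature of the commutator $X_{\Poi{g}{\Chi_j}}=[X_g,X_{\Chi_j}]$ contributing two terms: the bound on $\nvec{g_r}{\dfr}$ must come from running the vector-field induction on the $g$-chain directly, as you do for $f_r$, not from differentiating the scalar bound. Note also that doing so verbatim would produce $M_2^{r-2}$ rather than the claimed $M_1^{r-2}$, so the fact that the $g$-chain (which starts at $g_1=g$ and whose recursion sums only up to $j=r-1$) closes with the better constant $M_1$ is an additional piece of bookkeeping that neither of your two proposed routes accounts for. Apart from this, the remaining combinatorics is exactly what both you and the paper delegate to \cite{BGG89,BamG93}.
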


Given the above estimates, we can obtain the inclusions
\eqref{e.incl.balls}; indeed, we have
\begin{equation*}
\nfun{T_\Chi z - z}{\dfr}\leq \sum_{r\geq 1}\nfun{z_r}{\dfr}\leq
\frac{\tilde G_1}{1-M_1}<2\tilde G_1 = \phi\tond{\frac{2R}{\dfr}}\ ,
\end{equation*}
provided we ask for $M_1<\frac12$; thus the deformation is
$\Oscr(R\eps)$. The remainder $\Rscr^{(r+1)}$ in \eqref{e.NF}, at an
arbitrary step $r$, is given by
\begin{displaymath}
\Rscr^{(r+1)} = \sum_{s\geq r+1}G_s + \sum_{s\geq r+1}F_s \ ;
\end{displaymath}
by exploiting \eqref{e.10.3.vec} and the initial estimates, the
following bounds hold true
\begin{equation}
  \label{e.rem.est}
  \nvec{\Theta}{\dfr} \leq \tond{\frac{2\phi}{\dfr}} M_2^{s-1} ,
  \ \forall\Theta\in\{G_s,F_s\}
  \quad\Rightarrow\quad
\nvec{\Rscr^{(r+1)}}{\dfr}\leq
\tond{\frac{2\phi}{\dfr}}\frac{M_2^r}{1-M_2}\ .
\end{equation}
The exponential estimate \eqref{e.main.est.1} is derived from
\eqref{e.rem.est} by expanding $M_2^r$ as
\begin{equation}
  \label{e.exp.est}
M_2^r = \tond{\frac{6\phi}{\dfr}}^r r^r \tond{1+\frac{1}{r}}^r <
e\tond{\frac{6\phi}{\dfr}}^r r^r\ ,
\end{equation}
and optimizing the number of normal form steps
$r=r_{opt}:=\lfloor\frac{\dfr}{6e\phi}\rfloor$; in this way $r$ is
related $\eps$.  Finally, the variation of $G$ along the generic orbit
in the neighbourhood of the origin is obtained combining the estimate
of the Poisson bracket
\begin{displaymath}
|G(t)-G(0)|\leq \int_0^t|\Poi{G}{\Rscr^{(r+1)}}(z(s))|ds\leq
|t|\nfun{G}{\dfr}\tond{\frac{4e\phi}{\dfr^2}}\exp\quadr{-\tond{\frac{\dfr}{6e\phi}}}\ ,
\end{displaymath}
with the bound on the deformation
\simone{$|G(z)-G(T_\Chi(z))|=|(T_\Chi G-G)(z)|$}
and exploiting the fact that $G$ coincides with the norm on the phase
space. Similarly one can control the variation of $Z$.\qed


\subsection{Proof of Theorem~\ref{t.hoa} (high order approximation)}
\label{ss:p2}

The proof of point $\emph{(1)}$ is an easy application of the Implicit
Function Theorem (I.F.T.), based on the main assumption that $J_\Omega$ is
invertible in the subspace of real square-summable sequences. Indeed
$\Afr=\mathcal{A}$ is the ``unperturbed'' solution of equation
\eqref{e.dnls.1}
$
f(\mathcal{A},\Omega,0)=f_0(\mathcal{A},\Omega)=0 \ ,
$
and the Jacobian of $f$ evaluated at $\tond{\A,\eps=0}$ is given by
$
D_\Afr f(\mathcal{A},\Omega,0)=J_\Omega\ ,
$
which is invertible. The first of
\eqref{e.ds.per.bound} is standard, while the second can be easily
obtained from
\begin{displaymath}
D_{\Afr}f(\Afr) = D_\Afr f_0(\Afr) + \Oscr(\eps) = D_\Afr
f_0(\mathcal{A}) + D^2_\Afr
f_0(\mathcal{A})\tond{\Afr-\mathcal{A}}+\Oscr(\eps) = J_\Omega + \Oscr(\eps)\ .
\end{displaymath}

Once proved the existence of $\Afr$, discrete soliton of $K$, we
follow the same strategy used for Theorem~\eqref{theorem-existence} in
order to prove the existence of an analytic branch of Klein-Gordon
breathers\footnote{We again ``identify'' solitons and breathers with
  thier amplitude(s).} $\bar\Afr$ exponentially close to
$\Afr(\Omega,\eps)$.  Let us consider the Hamilton equation for
\eqref{e.NF} restricting to the variable $\zeta$ only
(recall $i\eta=-\bar\zeta$)
\begin{displaymath}
\omega\partial_\tau\zeta = \im\zeta + \partial_\eta Z + \partial_\eta
\Rscr^{(r+1)}\ ;
\end{displaymath}
by inserting the Fourier expansion \eqref{e.br.fourier} with real
coefficients $\Afr^{(m)}$, we get the equation
\begin{displaymath}
\hat L_\omega \bar{\Afr} + \partial_\eta Z(\bar{\Afr}) + \partial_\eta
\Rscr^{(r+1)}(\bar{\Afr}) = 0\ ,
\qquad\qquad
\tond{\hat L_\omega\bar{\Afr}}^{(m)} := \im(1-m\omega)\Afr^{(m)}\ ,
\end{displaymath}
since the Hamilton equations are first order in time. The Kernel $\hat
V_2$ of the linear operator $\hat L_{\omega=1}$ is given only by
$e^{\im\tau}$, all the other harmonics belonging to the Range $\hat
W_2$. We decompose $\bar{\Afr} = \bar\Afr^\sharp + \bar\Afr^\flat$ and project
the equations on $\hat V_0$ and $\hat W_0$. The great difference with
respect to the proof of Theorem~\ref{theorem-existence}, is that the
resonant normal form construction, performed averaging with respect to
the periodic flow $e^{\im\tau}$, provides a natural decomposition of
the term $\partial_\eta Z(\bar\Afr^\sharp)$, so that one has for free
\begin{equation}
\label{e.proj.main}
\Pi_{\hat W_0}\partial_\eta Z(\bar\Afr^\sharp) = 0\ ,\qquad\qquad
\Pi_{\hat V_0}\partial_\eta Z(\bar\Afr^\sharp) = \partial_\eta Z(\bar\Afr^\sharp)\ .
\end{equation}
This remarkable property allows to write the Range equation as
\begin{displaymath}
  \hat L_\omega \bar\Afr^\flat + \quadr{
    \Pi_{\hat W_0}\partial_\eta Z(\bar\Afr^\flat+\bar\Afr^\sharp) -
    \Pi_{\hat W_0}\partial_\eta Z(\bar\Afr^\sharp)} +
  \Pi_{\hat W_0}\partial_\eta \Rscr^{(r+1)}(\bar\Afr^\flat+\bar\Afr^\sharp)=0\ ,
\end{displaymath}
where the term in square brackets
is at least linear in $\bar\Afr^\flat$, $\Oscr(\eps\bar\Afr^\flat)$,
hence a small perturbation with respect to $\hat L_{\omega=1}$. The
usual leading order approximation provided by the I.F.T. gives, for
some constant $C$ independent of $\eps$,
\begin{equation}
\label{e.range.est.1}
\bar\Afr^\flat \approx -\tond{\hat L_\omega}^{-1}\Pi_{\hat
  W_0}\partial_\eta \Rscr^{(r+1)}(\bar\Afr^\flat) \qquad\Rightarrow\qquad
\norm{\bar\Afr^\flat}\leq C\eps^{r+1}\ .
\end{equation}

The Kernel equation, after dividing by $\eps$, takes the form
\begin{displaymath}
\Omega\bar\Afr^\sharp - \Delta\bar\Afr^\sharp +
\Gamma_p|\bar\Afr^\sharp|^{2p}\bar\Afr^\sharp +
\sum_{s=2}^{r}\eps^{s-1}\Pi_{\hat V_0}\partial_\eta
\Zscr_s\tond{\bar\Afr^\sharp)} + \Oscr\tond{\eps^r}=0\ ,
\end{displaymath}
where we have introduced the scaled functions
$\Zscr_s:=\eps^{-s}Z_s$. Notice that in the small remainder we
included not only the smallness of the vector field
$\partial_\eta \Rscr^{(r+1)}$, but also
\begin{displaymath}
{\Pi_{\hat V_0}\partial_\eta \Zscr_s(\bar\Afr^\flat+\bar\Afr^\sharp) - \Pi_{\hat
    V_0}\partial_\eta \Zscr_s(\bar\Afr^\sharp)} = \Pi_{\hat V_0}\partial_\eta
\Zscr_s(\bar\Afr^\flat+\bar\Afr^\sharp) - \partial_\eta \Zscr_s(\bar\Afr^\sharp)=
\Oscr(\bar\Afr^\flat)\ ,
\end{displaymath}
which is of order $\Oscr\tond{\eps^{r+1}}$ for any $1\leq s\leq r$,
because of \eqref{e.range.est.1}. The Kernel equation now turns out to
be a perturbation of order $\Oscr(\eps^{r})$ of
\begin{equation}
\label{e.ker.tr}
f(\bar\Afr^\sharp,\Omega,\eps):=\Omega\bar\Afr^\sharp - \Delta\bar\Afr^\sharp +
\Gamma_p|\bar\Afr^\sharp|^{2p}\bar\Afr^\sharp +
\sum_{s=2}^{r}\eps^{s-1}\Pi_{\hat V_0}\partial_\eta
\Zscr_s\tond{\bar\Afr^\sharp)}=0\ ,
\end{equation}
since the last term in the sum of $\Zscr_s$ is of order
$\Oscr(\eps^{r-1})$.  Equation \eqref{e.ker.tr} admits the solution
$\bar\Afr=\bar\Afr(\Omega,\eps)$ given by Proposition~4.1., and since
$J_{\Omega,\eps}$ is invertible for $\eps$ sufficiently small, a fixed
point argument (see for example Appendix of \cite{BamPP10}) can be
used to conclude the proof; estimate \eqref{e.err-omega.2} is
standard, while estimate \eqref{e.err-A.2} follows once the generic
step $r$ is replaced with the optimal choice $r=r_{opt}(\eps)$.

In order to prove point $\emph{(3)}$ we exploit the fact that the two
periods, $T_{\textrm{ds}}$ and $T_{\textrm{br}}$, are both
approximately equal to $2\pi$, because of \eqref{e.err-omega.2}. Hence
\begin{displaymath}
\norm{\zeta_{\textrm{ds}}(t) - \zeta_{\textrm{br}}(t)}\leq \sum_{m\neq
  1}\norm{\Afr^{(m)}} + \norm{\Afr^{(1)}e^{\im\omega t} - \Afr
  e^{\im(1-\frac\eps2\Omega)t}}\ ;
\end{displaymath}
moreover, on a time interval of order
$\Oscr(1)=\max\graff{T_{\textrm{ds}},T_{\textrm{br}}}$, we have
\begin{displaymath}
\norm{\Afr^{(1)} e^{\im\omega t} - \Afr
  e^{\im(1-\frac\eps2\Omega)t}}\leq \norm{\Afr^{(1)}
  e^{\Oscr(\eps^2)t} - \Afr}\leq C\norm{\Afr^{(1)} - \Afr}\ .
\end{displaymath}
The estimates holds true also in the original coordinates, exploiting
the Lipschitz continuity of the canonical transformation
$T^{-1}_\Chi$, with a Lipschitz constant $L=\Oscr(1)$. \qed


\subsection{Proof of Theorem~\ref{t.exp.stab}
  (exponentially long time stability)}
\label{ss:p3}

We collect the main geometrical ideas already exploited in
\cite{Bam96,PalP16}, omitting most of the details that the interested
reader can find in the quoted papers.

We first have to prove the orbital stability of the discrete soliton
$\Afr$, interpreted as approximate breather solution of the
Klein-Gordon model. We denote with $\Orb(\Afr)$ the closed orbit
described by the discrete soliton profile $\Afr$ during its periodic
evolution
\begin{displaymath}
\Orb(\Afr):=\graff{e^{\im
    (1-\frac\eps2\Omega)t}\Afr,\,t\in[0,T_{\textrm{ds}}]}\ .
\end{displaymath}
We consider a tubolar neighbourhood $\mathcal{W}_0$ of $\Orb(\Afr)$,
in the transformed coordinates which give the original Hamiltonian the
normal form \eqref{e.main.est.1}. Any point $z\in\mathcal{W}_0$ can be
represented with a local set of coordinates
\begin{displaymath}
z = (\ph,E,v)\in\RR\times\RR\times V_\xi\ ,
\end{displaymath}
where $\xi$ is the projection of $z$ on $\Orb(\Afr)$ and $V_\xi$ is
the orthogonal complement to the symmetry field $X_G(\xi)$ in the
tangent space $T_\xi \Sph = V_\xi\oplus X_G(\xi)$. The three
coordinates represent, respectively: the scalar coordinate $\ph$ is
the tangential displacement along the field $X_G$, the scalar
coordinate $E$ is the displacement in the direction $\nabla G$
orthogonal to the surface $\Sph$ and the vector valued coordinate $v$
is the tangential displacement in the directions of $V_\xi$. In order
to measure the orbital distance of a generic point $z$ from
$\Orb(\Afr)$ we need to control only the directions transversal to the
orbits, hence $E$ and $v$. The main point is that $E$ is related to
the variation of $G$ while $v$ is related to the variation of $\Zscr$:
the first is obvious, while the second holds because we have asked
$\Afr$ to be a nondegenerate extremizer of $\Zscr$ on $\Sph$, hence
locally we have
\begin{displaymath}
\norm{v}^2 = \norm{z-\xi}^2\leq \frac1{C}|\Zscr(z)-\Zscr(\xi)|\ ,
\end{displaymath}
where $C$ is a constant depending on $\Zscr''(\xi)$. Let us consider
an initial datum $z_0\in\mathcal{W}_0$ and its piece of orbit
$\Phi^t_H(z_0)\cap \mathcal{W}_0$; for any point on this curve we have
\begin{align*}
  \inf_{w\in \Orb(\Afr)\cap\mathcal{W}_0}\norm{\Phi^t_H(z_0) - w}
  &\leq
  c_1|E(t)| + c_2\norm{v(t)}
  \leq
  \\
  &\leq
  C\sqrt{|G(\Phi^t_H(z_0)) - G(\Afr)| + |\Zscr(\Phi^t_H(z_0)) - \Zscr(\Afr)|}\ .
\end{align*}
If $z_0$ is taken in a suitable domain where \eqref{e.main.est.2} hold
true, then the two terms in the square root can be bounded by
\begin{align}
  |G(\Phi^t_H(z_0)) - G(\Afr)| &\leq |G(\Phi^t_H(z_0)) - G(z_0)| + |G(z_0) - G(\Afr)|\\
  |\Zscr(\Phi^t_H(z_0)) - \Zscr(\Afr)| &\leq |\Zscr(\Phi^t_H(z_0)) -
  |\Zscr(z_0)|+|\Zscr(z_0) - \Zscr(\Afr)|\ ;
\end{align}
the first right hand terms are exactly controlled by
\eqref{e.main.est.2} on exponentially long times, while the second
right hand terms are controlled by the initial distance from the
orbit. This allows to get \eqref{e.lt.est} for the discrete soliton
$\Afr$ in the normal form coordinates.  The stability result then is
transferred to the \simone{orbit $\Orb(\bar\Afr)$ of the} Klein-Gordon
breather exploiting the exponentially small distance between the two
orbits, as stated by \eqref{e.cor.est}. Finally, the same stability
result holds also in the original coordinates, since $T_\Chi^{-1}$ is
Lipschitz with a Lipschitz constant $L=\Oscr(1)$.\qed


\end{document}